\documentclass[onefignum,onetabnum]{siamart171218}



\usepackage{lipsum}
\usepackage{amsfonts}
\usepackage{graphicx}
\usepackage{epstopdf}
\usepackage{algorithmic}
\ifpdf
  \DeclareGraphicsExtensions{.eps,.pdf,.png,.jpg}
\else
  \DeclareGraphicsExtensions{.eps}
\fi


\newsiamremark{remark}{Remark}
\newsiamremark{hypothesis}{Hypothesis}
\crefname{hypothesis}{Hypothesis}{Hypotheses}
\newsiamthm{claim}{Claim}

\headers{fluorescence Ultrasound Modulated Optical Tomography}{W. Li, Y. Yang and Y. Zhong}

\title{A hybrid inverse problem in the fluorescence ultrasound modulated optical tomography in the diffusive regime\thanks{Submitted to the editors \today.
\funding{The research of YY was partly
supported by NSF Grant DMS-1715178, an AMS Simons travel grant, and a start-up fund from Michigan State University.}}}

\author{Wei Li\thanks{Department of Mathematics, Louisiana State University, Baton Rouge, LA 70803-4918, USA. 
  (\email{liwei@lsu.edu}). \url{} }
\and Yang Yang\thanks{Department of Computational Mathematics, Science and Engineering, Michigan State University, East Lansing, MI 48824, USA. 
  (\email{yangy5@msu.edu}). \url{https://cmse.msu.edu/directory/faculty/yang-yang/} }
\and Yimin Zhong\thanks{Department of Mathematics, University of California, Irvine, Irvine, CA 92697, USA. 
  (\email{yiminz@uci.edu}). \url{} } }

\usepackage{amsopn}


\usepackage{hyperref}
\usepackage{amsmath,amssymb,latexsym,mathrsfs}
\usepackage{enumerate}
\usepackage{bm,cite}
\usepackage{graphicx,color}






\newcommand{\eps}{\varepsilon}

\newcommand{\pdr}[2]
{\dfrac{\partial{#1}}{\partial{#2}}}



 
 \newcommand{\bbN}{\mathbb N}
 
\newcommand{\bbR}{\mathbb R}


\newcommand{\bn}{\mathbf n}
 \newcommand{\bp}{\mathbf p}
\newcommand{\bq}{\mathbf q} 
\newcommand{\bs}{\mathbf s} 
  
 \newcommand{\bx}{\mathbf x} 
 \newcommand{\bz}{\mathbf z}




\ifpdf
\hypersetup{
  pdftitle={fluorescence Ultrasound Modulated Optical Tomography},
  pdfauthor={W. Li, Y. Yang and Y. Zhong}
}
\fi




\begin{document}

\maketitle

\begin{abstract}
	We investigate a hybrid inverse problem in fluorescence ultrasound modulated optical tomography (fUMOT) in the diffusive regime. We prove that the absorption coefficient of the fluorophores at the excitation frequency and the quantum efficiency coefficient can be uniquely and stably reconstructed from boundary measurement of the photon currents, provided that some background medium parameters are known.
Reconstruction algorithms are proposed and numerically implemented as well.	
\end{abstract}

\begin{keywords}
  inverse problem, hybrid modality, internal data, fluorescence, ultrasound modulated optical tomography, absorption coefficient, quantum efficiency, photon currents, uniqueness, stability, reconstruction.
\end{keywords}

\begin{AMS}
  35R30, 35Q60, 35J91
\end{AMS}

\section{Introduction}

Fluorescence optical tomography (FOT) is an emerging high-contrast biomedical imaging modality that localizes fluorescent targets within tissues~\cite{arridge1999optical,chang1995fluorescence,chaudhari2005hyperspectral, corlu2007three,milstein2003fluorescence}. In a FOT experiment, a short near-infrared laser pulse of certain excitation frequency is sent to biological samples which are labeled with fluorophores either by injection of dye solutions or by intrinsic gene expression of fluorescent proteins. The fluorophores get excited after absorbing the illumination, and later decay back to the ground state by emitting light at a lower emission frequency.
The emitted light and the residual exicitation light are detectable at the boundary of the tissue, and can be used to reconstruct the spatial concentration and lifetimes of the fluorophores. These reconstructed quantities prove to be useful in many areas including disease diagnosis, gene information decoding and biological processes tracking~\cite{ntziachristos2006fluorescence,o1996fluorescence,stephens2003light}.
%

Despite high optical contrast, the reconstruction in FOT is known to be unstable and suffer from limited resolution at large spacial scales in highly scattering media~\cite{arridge2009optical,bal2009inverse,uhlmann2009electrical,chaudhari2005hyperspectral,herve2010non,leblond2009early,lin2012temperature}.
A strategy to overcome such limitations is to incorporate acoustic modality of high resolution. Two types of incorporation are prevalent. One is to take advantage of the \textit{photo-acoustic effect}--the phenomenon that absorption of near-infrared light causes thermoelastic expansion of the medium which in turn induces acoustic pulses. The resulting imaging modality is known as fluorescence photo-acoustic tomography (fPAT)~\cite{ren2013quantitative,burgholzer2009photoacoustic,razansky2009multispectral,razansky2007hybrid,wang2012photoacoustic,wang2010integrated} . The other is to perturb the medium by acoustic radiation while making optical measurements at the boundary. The resulting imaging modality is called \textit{fluorescence ultrasound modulated optical tomography}, or \textit{fUMOT} in short. This article is devoted to the study of an inverse problem in fUMOT. 

It is worth pointing out that, in the absence of fluorescence, the above two ways of incorporating acoustic information into optical measurement give rise to imaging modalities known as photoacoustic tomography (PAT)~\cite{wang2004ultrasound,bal2011multi,bal2011quantitative,fisher2007photoacoustic,xia2014photoacoustic,wang2016practical,li2017single}.
 and ultrasound-modulated optical tomography (UMOT)~\cite{bal2010inverse,marks1993comprehensive,kempe1997acousto,granot2001detection,wang1995continuous,ku2005deeply,jiao2002two}. , respectively. Other frequent aliases are thermoacoustic tomography (TAT) for the former and acousto-optic imaging (AOI) for the latter. More broadly, the idea of combining high-contrast modalities with high-resolution ones leads to a large variety of novel imaging modalities known as hybrid modalities. Inverse problems in hybrid modalities are often referred to as \textit{hybrid inverse problems} or \textit{coupled physics inverse problems}.

fUMOT is a hybrid imaging modality where FOT is combined with acoustic modulation to produce high-contrast high-resolution images. The principle is to perform multiple measurements of the excitation and emission photon currents at the boundary as the optical properties undergo a series of perturbations by acoustic radiation. These measurements turn out to provide internal information of the optical field, leading to a well-posed inverse problem with resolution at the order of the acoustic wavelength~\cite{bal2009inverse}. The feasiblity of fUMOT has been experimentally verified in ~\cite{yuan2009ultrasound,yuan2008mechanisms,yuan2009microbubble,liu2014ultrasound}.

We now derive the mathematical model of fUMOT in a highly-scattering medium following the derivation of fPAT in~\cite{ren2013quantitative} and that of UMOT in \cite{bal2010inverse}. Let $u(\bx,t)$ be the photon density at the excitation frequency, and $w(\bx,t)$ be the photon density at the emission frequency. Propagation of photons in a highly-scattering medium is typically described by the diffusion equation. As photons in the excitation process and those in the emission process exist simultaneously, their density functions obey the coupled time-dependent diffusion equations:
  \begin{equation}\nonumber\label{eq:fltime}
    \begin{aligned}
      \frac{1}{c} \pdr{u(\bx,t)}{t} - \nabla \cdot D_x(\bx)\nabla u(\bx,t) + (\sigma_{x,a}(\bx) + \sigma_{x,f}(\bx))u(\bx,t) &= 0 \quad\quad \text{ in } \Omega \times \bbR^+,\\ 
     \frac{1}{c} \pdr{w(\bx,t)}{t}  - \nabla \cdot D_m(\bx)\nabla w(\bx,t) + (\sigma_{m,a}(\bx) + \sigma_{m,f}(\bx))w(\bx,t) &= S(\bx,t) \text{ in } \Omega \times \bbR^+,\\
      u(\bx,t) = g(\bx,t), \quad w(\bx,t) = 0 &\;\;\quad\quad\quad \text{ on } \partial\Omega \times \bbR^+,\\
            u(\bx,t) = 0, \quad w(\bx,t) = 0 &\;\;\quad\quad\quad \text{ on } \Omega\times \left\{0\right\}.
    \end{aligned}
  \end{equation}
Here $\Omega\subseteq\bbR^d (d=2,3)$ is the domain of interest, $g(\bx,t)$ is the external excitation source, 
$D_x$ (resp. $D_m$) is the diffusion coefficient, $\sigma_{x,a}$ (resp. $\sigma_{m,a}$) is the absorption coefficient of the medium, and $\sigma_{x,f}$ (resp. $\sigma_{m,f}$) is the \textit{absorption coefficient of the fluorophores at the excitation frequency} (resp. emission frequency). The emission source $S(\bx,t)$ is given by
\begin{equation} \label{eq:intsource}
S(\bx,t) = \eta(\bx) \sigma_{x,f}(\bx) \int_0^t \frac{1}{\tau} e^{-\frac{t-s}{\tau}} u(\bx,s) ds,
\end{equation}
where $\eta(\bx)$ is the \textit{quantum efficiency coefficient} of the fluorophores and $\tau$ is the lifetime of the excited state.
Under the assumptions that the medium is non-trapping and that the lifetime of the fluorophores is greater than the absorption time scale, i.e., $ \sigma_{x,a}(\bx) + \sigma_{x,f}(\bx) >\frac{1}{c\tau}$, the integrals $u(\bx) := \int_{\bbR^+} u(\bx,t)dt$, $w(\bx) := \int_{\bbR^+} w(\bx,t)dt$ and $g(\bx) := \int_{\bbR^+} g(\bx,t)dt$ satisfy the coupled time-independent diffusion equations~\cite{ren2013quantitative}:
  \begin{equation}\label{eq:fl}
    \begin{aligned}
        - \nabla \cdot D_x(\bx)\nabla u(\bx) + (\sigma_{x,a}(\bx) + \sigma_{x,f}(\bx))u(\bx) &= 0, &\quad\text{ in }&\Omega\\ 
      - \nabla \cdot D_m(\bx)\nabla w(\bx) + (\sigma_{m,a}(\bx) + \sigma_{m,f}(\bx))w(\bx) &= \eta(\bx) \sigma_{x,f}(\bx)u(\bx), &\quad\text{ in }&\Omega\\
      u(\bx) = g(\bx), &\quad w(\bx) = 0,\quad&\text{ on }&\partial\Omega
    \end{aligned}
  \end{equation} 
In practice, the coefficient $\sigma_{m,f}$ is extremely small compared to other coefficients hence will be neglected hereafter.

We shall take into account the acoustic modulation for FOT. Consider a plane acoustic wave field of the form $p(\bx, t) = A\cos(\omega t)\cos(\bq \cdot \bx + \phi)$ where $A$ is the amplitude, $\omega$ is the frequency, $\bq$ is the wave vector and $\phi$ is the phase. The acoustic field is supposed to be weak in the sense that $0<A\ll \rho c_s^2$ with $\rho$ the particle number density and $c_s$ the sound speed. The time scale of the acoustic field propagation is generally much greater than that of the optical field and the lifetime of the fluorophores, therefore the acoustic field effectively modulates the time independent diffusion equations~\eqref{eq:fl}. The modulated diffusion and absorption coefficients are shown to take the form~\cite{bal2010inverse}
\begin{equation}\label{eq:mo}
\begin{aligned}
D_x^{\epsilon}(\bx) &= (1 +\epsilon\gamma_x\cos(\bq \cdot \bx + \phi))D_x(\bx), \quad &\gamma_x &= (2 n_x - 1),\\
D_m^{\epsilon}(\bx) &= (1 + \epsilon\gamma_m\cos(\bq \cdot \bx + \phi))D_m(\bx), \quad &\gamma_m &= (2 n_m - 1),\\
\sigma_{x,a}^{\epsilon}(\bx) &= (1 + \epsilon\beta_x\cos(\bq \cdot \bx + \phi) )\sigma_{x,a}(\bx),\quad &\beta_x &= (2 n_x + 1),\\
\sigma_{m,a}^{\epsilon}(\bx)&=(1+\epsilon\beta_m\cos(\bq \cdot \bx + \phi))\sigma_{m,a}(\bx),&\beta_m& = (2 n_m + 1),\\
\sigma_{x,f}^{\epsilon}(\bx) &=(1+\epsilon\beta_f\cos(\bq \cdot \bx + \phi))\sigma_{x,f}(\bx),\quad &\beta_f &= (2 n_f + 1),
\end{aligned}
\end{equation}
where the parameters $\epsilon=\frac{A \cos(\omega t)}{\rho c_s^2}\ll 1$, and $n_x, n_m, n_f$ are the elasto-optical constants of the background medium and fluorophores. Note that the quantum efficiency coefficient $\eta(\bx)$ in~\eqref{eq:intsource} is not altered by the acoustic modulation. 
Combining \eqref{eq:fl} and \eqref{eq:mo}, we obtain the \textit{governing equations for fUMOT}:
\begin{align}
  - \nabla \cdot D^{\epsilon}_{x} \nabla u_{\epsilon} + (\sigma^{\epsilon}_{x,a} + \sigma^{\epsilon}_{x,f})u_{\epsilon} &= 0 & \text{ in } \Omega, &  \quad\quad u_\epsilon = g \quad\quad \text{ on } \partial\Omega; \label{eq:fl2u}\\ 
  - \nabla \cdot D^{\epsilon}_m\nabla w_{\epsilon} + \sigma^{\epsilon}_{m,a}w_{\epsilon} & = \eta \sigma^{\epsilon}_{x,f}u_{\epsilon} & \text{ in } \Omega & \quad\quad w_\epsilon = 0 \quad\quad \text{ on } \partial\Omega. \label{eq:fl2w}
\end{align}
This is the diffusive model that we will be working with in this article.

For a fixed excitation source $g$, the measurement in fUMOT is the boundary photon currents at both the excitation and emission frequencies~\cite{arridge2009optical,ren2010recent}, that is $D_x^{\epsilon} \frac{\partial u_{\epsilon}}{\partial \bn}$ and $D_m^{\epsilon} \frac{\partial w_{\epsilon}}{\partial\bn}$ on $\partial\Omega$ for sufficiently small $\epsilon \geq 0$. Such boundary photon currents can be measured for multiple wave vectors $\bq$ and phases $\phi$. We therefore model the measurement in full generality by the operator
\begin{equation} \label{eq:meas}
\Lambda^{\epsilon}(\bq, \phi) =  \left. (D_x^{\epsilon}\frac{\partial u_{\epsilon}}{\partial \bn} , D_m^{\epsilon} \frac{\partial w_{\epsilon}}{\partial\bn}) \right|_{\partial\Omega}.
\end{equation}
Our goal is to reconstruct the absorption coefficient $\sigma_{x,f}(\bx)$ and the quantum efficiency coefficient $\eta(\bx)$ from $\Lambda^{\epsilon}$, under the assumption that the elasto-optical coefficients and the background parameters $D_x$, $D_m$, $\sigma_{x,a}$ and $\sigma_{m,a}$ have already been reconstructed through other imaging techniques, such as those in~\cite{bal2010inverse,bal2011multi,ren2015inverse,ren2013quantitative}.
%
%

We will tackle this hybrid inverse problem in two steps. Firstly, a non-linear inverse medium problem is solved at the excitation frequency to reconstruct the absorption coefficient $\sigma_{x,f}(\bx)$. Secondly, a linear inverse source problem is studied at the emission frequency to recover the quantum coefficient $\eta(\bx)$. 
Our results for the inverse medium problem include some of the existing results in~\cite{triki2010uniqueness,bal2011non,bal2013cauchy} as sub-cases.

The rest of the paper is organized as follows. In Section~\ref{SEC:ASSUMP}, we state the necessary regularity assumptions to set up the inverse problem, and derive two pieces of internal data from the measurement operator $\Lambda^\epsilon$. Section~\ref{SEC:REC1}, \ref{SEC:REC1_2}, \ref{SEC:NUM} deals with the uniqueness, stability, and reconstruction procedures of the absorption coefficient $\sigma_{x,f}$, respectively. In Section~\ref{SEC:REC1}, we relate the reconstruction of $\sigma_{x,f}$ to the solvability of a semi-linear elliptic boundary value problem and prove the unique identifiability. The stability estimate is derived in Section~\ref{SEC:REC1_2}, which shows the inverse problem of recovering $\sigma_{x,f}$ from the internal data is well posed. Section~\ref{SEC:NUM} consists of three reconstruction algorithms designed for different values of the elasto-optical constants. In Section~\ref{SEC:REC2}, we show that the quantum efficiency coefficient $\eta$ can be uniquely and stably determined by solving an inverse source problem. The proposed reconstruction procedures for both $\sigma_{x,f}$ and $\eta$ are numerically implemented in Section~\ref{SEC:EXP}.

%
%

\section{Internal data}\label{SEC:ASSUMP}
To ensure sufficient regularity of the solutions to the modulated diffusion equations~\eqref{eq:fl2u} \eqref{eq:fl2w}, we make the following assumptions throughout the article:
\begin{enumerate}
	\item [A-1] The domain $\Omega$ is simply connected and $\partial\Omega$ is $C^2$.
	\item [A-2] The optical coefficients satisfy $(D_{x}, \sigma_{x,a})\in C^{0,1}(\overline{\Omega})\times L^{\infty}(\Omega)$, $(D_{m},\sigma_{m,a})\in C^{0,1}(\overline{\Omega})\times L^{\infty}(\Omega)$, $\sigma_{x,f}\in L^{\infty}(\Omega)$, $\eta \in L^{\infty}(\Omega)$, $g(\bx)$ is the restriction of a $C^1(\overline{\Omega})$ function-called $g(\bx)$ again after abusing the notation-on $\partial\Omega$ and 
	\begin{equation}
	K_0 < D_x, D_m, \sigma_{x,a} , \sigma_{m,a}, \sigma_{x,f}, g < K_1
	\end{equation}
	for some positive constants $K_0$ and $K_1$. 
	
\end{enumerate}
Under these assumptions, the elliptic equations~\eqref{eq:fl2u} \eqref{eq:fl2w} admits a unique solution $(u^{\epsilon},w^{\epsilon})\in H^1(\Omega)\times H^1(\Omega)$ by elliptic PDE theory, and the measurement operator 
$$
\Lambda^{\epsilon}:\bbR^d\times \{0,\frac{\pi}{2}\} \to H^{-1/2}(\partial\Omega)\times H^{-1/2}(\partial\Omega), \quad\quad\quad (\bq,\phi)\mapsto \left. (D_x^{\epsilon}\frac{\partial u_{\epsilon}}{\partial \bn} , D_m^{\epsilon} \frac{\partial w_{\epsilon}}{\partial\bn}) \right|_{\partial\Omega}
$$
is continuous.

%
%

For later reference, we derive two types of internal data from the map $\Lambda^\epsilon$. Let $u_{\epsilon}$ and $u_{-\epsilon}$ be solutions to~\eqref{eq:fl2u} with the same boundary condition $g$. 
Following ~\cite{bal2013cauchy}, we integrate by parts to obtain
\small
\begin{equation}\nonumber
\int_{\Omega} (D_x^{\epsilon} - D_x^{-\epsilon})\nabla u_{\epsilon} \cdot \nabla u_{-\epsilon} + (\sigma^{\epsilon}_{x,a} + \sigma^{\epsilon}_{x,f} - \sigma^{-\epsilon}_{x,a} - \sigma^{-\epsilon}_{x,f})u_{\epsilon}u_{-\epsilon} d\bx = \int_{\partial\Omega} D_x^{\epsilon}\frac{\partial u_{\epsilon}}{\partial \bn} g- D_x^{-\epsilon}\frac{\partial u_{-\epsilon}}{\partial\bn} g d\bs
\end{equation}
\normalsize
The right hand side is known since the Dirichlet data $g$ is artificially imposed and the Neumann data $D_x^{\epsilon}\partial_{\bn} u_{\epsilon}$ and $D_x^{-\epsilon}\partial_{\bn} u_{-\epsilon}$ are measured at the boundary $\partial\Omega$. We define
\begin{equation}\nonumber
J_{\epsilon} = \frac{1}{2}\int_{\partial\Omega} D_x^{\epsilon}\frac{\partial u_{\epsilon}}{\partial \bn} g- D_x^{-\epsilon}\frac{\partial u_{-\epsilon}}{\partial\bn} g d\bs
\end{equation}
and perform the asymptotic expansion in $\epsilon$: 
$J_{\epsilon} = \epsilon J_1 + \epsilon^2 J_2 + \dots$. 
Equating the order $\epsilon$ terms and using \eqref{eq:mo}, we find
$
J_1(\bq, \phi) = \int_{\Omega} Q(\bx) \cos(\bq\cdot \bx + \phi) d\bx.
$ 
with
\begin{equation}\label{eq:hamilton1}
Q(\bx):=\gamma_x D_x  |\nabla u_0|^2 + (\beta_x\sigma_{x,a}+\beta_f\sigma_{x,f}) |u_0|^2. 
\end{equation}
The function $Q$ can be recovered from $J_1$ by $Q = \mathcal{F}^{-1}\left(J_1(\bq, 0) - iJ_1(\bq, \frac{\pi}{2})\right)$ where $\mathcal{F}^{-1}$ denotes the inverse Fourier transform. This is our first internal data.

Next, let $v$ and $p$ be auxiliary functions satisfying
\begin{equation} \label{eq:v}
-\nabla \cdot D_m \nabla v + \sigma_{m,a} v = 0\quad\text{ in }\Omega,\quad\quad\quad v= h\quad\text{ on }\partial\Omega 
\end{equation}
\begin{equation} \label{eq:p}
-\nabla \cdot D_x \nabla p + (\sigma_{x,a} + \sigma_{x,f}) p= \eta\sigma_{x,f}v\quad\text{ in }\Omega,\quad\quad\quad p= 0\quad\text{ on }\partial\Omega 
\end{equation}
where $h>0$ is a fixed known function, For simplicity, we assume $h$ is the restriction of a $C^2$ function on $\partial\Omega$ so that $v > 0$ in $\Omega$. Multiply \eqref{eq:fl2u} by $p$, \eqref{eq:p} by $u_\epsilon$, and integrate the difference of the resulting two equations to get
\begin{equation} \label{eq:pu}
\int_\Omega (D_x - D^\epsilon_x) \nabla p \cdot \nabla u_\epsilon + (\sigma_{x,a} + \sigma_{x,f} - \sigma^\epsilon_{x,a} - \sigma^\epsilon_{x,f}) p u_\epsilon \, d\bx = \int_\Omega \eta \sigma_{x,f} v u_\epsilon \, d\bx+ b.t.
\end{equation}
where ``b.t.'' represents various boundary integrals from integration by parts. The integrands in these ``b.t.'' terms here and below involve merely boundary values of $u_\epsilon, w_\epsilon, v, p$ as well as their boundary normal derivatives, hence can be computed from our measurement. Likewise, integrating the difference of \eqref{eq:fl2w} multiplied by $v$ and \eqref{eq:v} multiplied by $w_\epsilon$ gives
\begin{equation} \label{eq:vw}
\int_\Omega (D^\epsilon_m - D_m) \nabla v \cdot \nabla w_\epsilon + (\sigma^\eps_{m,a} - \sigma_{m,a}) v w_\epsilon \, d\bx = \int_\Omega \eta \sigma_{x,f} v u_\epsilon \, d\bx+ b.t..
\end{equation}
Subtract \eqref{eq:vw} from \eqref{eq:pu} to eliminate $\int_\Omega \eta \sigma_{x,f} v u_\epsilon \,d\bx$, then the right hand side involves only boundary integrals, and the order $\epsilon$ term on the left, in view of \eqref{eq:mo}, takes the form $\int_\Omega S(\bx) \cos(\bq \cdot \bx + \phi)$ where
\begin{equation} \label{eq:S}
S := \gamma_m D_m \nabla w_0 \cdot \nabla v + \beta_m \sigma_{m,a} w_0 v + \gamma_x D_x \nabla p \cdot \nabla u_0 + (\beta_x \sigma_{x,a} + \beta_f \sigma_{x,f}) p u_0 - \eta \beta_f \sigma_{x,f} u_0 v.
\end{equation}
Varying $\bq$ and $\phi$ as in \eqref{eq:hamilton1} extracts $S$. This is our second internal data.

%

\section{Reconstruction of \texorpdfstring{$\sigma_{x,f}$}: uniqueness}\label{SEC:REC1}
Our objective is to reconstruct the coefficient $\sigma_{x,f}$ from knowledge of the internal data $Q(\bx)$. This is a nonlinear inverse medium problem.
The concentration of this article will be on the case $\beta_f \neq 0$ (see~\eqref{eq:mo} for the definition of $\beta_f$), but we would like to make a remark here upon the reconstruction if $\beta_f = 0$. Indeed, when $\beta_f = 0$, the internal data $Q$ in~\eqref{eq:hamilton1} does not explicitly contain $\sigma_{x,f}$. However, one can view \eqref{eq:hamilton1} as a Hamilton-Jacobi equation, and prove that it has a unique viscosity solution $u_0$ provided the background coefficients are sufficiently favorable~\cite[Theorem III]{crandall1983viscosity}. One then obtains from~\eqref{eq:fl2u} (with $\epsilon=0$) that
$$
\sigma_{x,f} = \frac{\nabla \cdot D_x\nabla u_0 - \sigma_{x,a}u_0}{u_0}.
$$
This gives an explicit reconstruction in the case $\beta_f = 0$.

We will henceforth study the reconstruction under the assumption $\beta_f\neq 0$. In this situation, determining $\sigma_{x,f}$ from $Q$ reduces to solving a semilinear elliptic equation. 
To see this, multiply the diffusion equation
\begin{equation}\label{eq:u0}
\begin{aligned}
-\nabla\cdot D_x\nabla u_0 + (\sigma_{x,a}+\sigma_{x,f}) u_0 &= 0,\quad\text{ in }\Omega\\
u_0 &= g,\quad \text{ on }\partial\Omega
\end{aligned}
\end{equation}
by $-u_0$ and replace the resulting term $\sigma_{x,f}|u_0|^2$ by the expression in \eqref{eq:hamilton1} to obtain
\begin{equation} \label{eq:u01}
u_0 \nabla \cdot D_x \nabla u_0 + \frac{\gamma_x}{\beta_f} D_x |\nabla u_0|^2 + \sigma_{x,a}(\frac{\beta_x }{\beta_f}-1)|u_0|^2 - \frac{Q}{\beta_f} = 0.
\end{equation}
Set $\tau:=\frac{\gamma_x}{\beta_f}$ and $\mu:=\frac{\beta_x}{\beta_f}-1$. Using the identity $(\tau+1)(u_0 \nabla \cdot D_x \nabla u_0 + \tau D_x |\nabla u_0|^2) = \frac{\nabla\cdot D_x \nabla u^{\tau+1}_0}{u^{\tau-1}_0} $, we combine the first two terms in \eqref{eq:u01} to have
$$
\nabla \cdot D_x \nabla u^{\tau+1}_0 + (\tau+1) \sigma_{x,a} \mu u^{\tau+1}_0 - (\tau+1) \frac{Q}{\beta_f} u_0^{\tau+1} = 0.
$$
Introduce 
\begin{equation} \label{eq:theta}
\theta:=\frac{1-\tau}{1+\tau} = \frac{\beta_f - \gamma_x}{\beta_f + \gamma_x}, \quad\quad\quad \Psi:=u^{\tau+1}_0 =u^{\frac{2}{1+\theta}}_0, 
\end{equation}
then $\Psi$ satisfies the boundary value problem
\begin{equation}\label{eq:key}
\begin{aligned}
\nabla \cdot a \nabla \Psi &= f(\bx, \Psi),\quad&\text{ in }&\Omega\\
\Psi &= g^{\frac{2}{1+\theta}},\quad&\text{ on }&\partial\Omega
\end{aligned}
\end{equation}
where $f(\bx, z) = bz + c|z|^{-(1+\theta)}z$ and the coefficients $a,b,c$ are defined by
\begin{equation}\nonumber
a := D_x , \quad b := -\frac{2}{1+\theta} \sigma_{x,a} \mu, \quad c := \frac{2}{1+\theta}\frac{Q}{\beta_f}.
\end{equation}
Here we have used the absolute value in $f(\bx, z)$ since we only look for positive solutions. It is clear that once $\Psi$ is determined, one can recover $u_0$ and then solve for $\sigma_{x,f}$ from \eqref{eq:hamilton1}. We henceforth focus on the well-posedness of the semilinear elliptic boundary value problem \eqref{eq:key}. We assume throughout this paper that $c \in L^{\infty}(\Omega)$. The two cases $-1\neq \theta<0$ and $\theta \geq 0$ will be treated separately for the uniqueness. Note that the condition $\beta_f \neq 0$ implies $\theta \neq -1$.

\begin{remark}
The case $\theta=\infty$, which corresponds to $\tau=-1$, will not be discussed in this work. However, we would like to point out its connection with the Yamabe's problem in Riemannian geometry. Indeed, dividing \eqref{eq:u01} by $u^2_0$ and introducing $\Phi:=log\, u_0$ yield
$$
\nabla \cdot D_x \nabla \Phi = -\sigma_{x,a} \mu + \frac{Q}{\beta_f} e^{-2\Phi}.
$$ 
This is the Yamabe equation, see~\cite{escobar1992yamabe, lee1987yamabe, taylor1997partial} for further details. 

\end{remark}

\subsection{Case (1): $-1 \neq \theta < 0$}

When $\theta \neq -1$ and $\theta < 0$,  we will prove the boundary value problem ~\eqref{eq:key} has a unique non-negative solution under appropriate assumptions. More precisely, we show

\begin{proposition}\label{pr:1}
Suppose the coefficients $\theta, b, c$ satisfy either one of the following conditions:
	\begin{enumerate}
		\item $-1 <\theta < 0$, and $b \geq 0, c \ge 0$ a.e. in $\Omega$;	
		\item $\theta < -1$, and $b \geq 0, c\ge 0$ a.e. in $\Omega$.
	\end{enumerate}
Then the equation~\eqref{eq:key} admits a unique non-negative weak solution $\Psi\in H^1(\Omega)$ with $\sup_{\Omega}\Psi = \sup_{\partial\Omega}\Psi$. In addition, 
	\begin{enumerate}
		\item when $-1 <\theta < 0$, there exists a constant $g_0 > 0$ such that if the boundary condition in~\eqref{eq:key} satisfies $g > g_0$ on $\partial\Omega$, then $\Psi > 0$ a.e. in $\Omega$.
		\item when $\theta <-1$, then $\Psi > 0$ a.e. in $\Omega$.
	\end{enumerate}
\end{proposition}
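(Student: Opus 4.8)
The plan is to read off three structural properties of the reaction term and let them drive the whole argument. Restricted to $z\ge 0$ the nonlinearity in \eqref{eq:key} becomes $f(\bx,z)=bz+cz^{-\theta}$ (since $|z|^{-(1+\theta)}z=z^{-\theta}$ there), and under the sign hypotheses $b,c\ge 0$ it is (i) nonnegative, (ii) nondecreasing in $z$, and (iii) continuous up to $z=0$ with $f(\bx,0)=0$, because $-\theta>0$ in both cases. Ellipticity is uniform since $a=D_x\ge K_0>0$ by A-2. I would handle existence, uniqueness, the maximum-principle identity, and positivity in turn, the last being case dependent.

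For existence I would use ordered sub- and supersolutions. The constant $0$ is a subsolution because $f(\bx,0)=0$, and the constant $M_0:=(\sup_{\partial\Omega}g)^{\frac{2}{1+\theta}}$ is a supersolution because $f(\bx,M_0)\ge 0$; both are compatible with the boundary datum $g^{\frac{2}{1+\theta}}$. To sidestep the superlinear growth of $f$ at infinity (which is genuine when $\theta<-1$), I would first truncate the argument of $f$ to $[0,M_0]$, obtaining a bounded Carath\'eodory nonlinearity, solve the truncated semilinear problem by a Schauder fixed point for the compact solution operator of $-\nabla\cdot a\nabla$ (equivalently, by minimizing the strictly convex, coercive functional $\int_\Omega \tfrac12 a|\nabla\Psi|^2+\tilde F(\bx,\Psi)\,d\bx$), and then use comparison with the sub/supersolutions to confine the solution to $[0,M_0]$, so that it solves \eqref{eq:key} itself; elliptic regularity places $\Psi\in H^1(\Omega)$. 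Since $f\ge 0$ gives $\nabla\cdot a\nabla\Psi\ge 0$, the weak maximum principle yields the stated identity $\sup_\Omega\Psi=\sup_{\partial\Omega}\Psi$ at once.

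Uniqueness follows from monotonicity. Any nonnegative $H^1$ solution obeys $0\le\Psi\le M_0$ by the maximum-principle identity, so $f(\bx,\Psi)\in L^\infty(\Omega)$ and the weak formulation is legitimate. Given two such solutions $\Psi_1,\Psi_2$ with the same boundary datum, their difference lies in $H_0^1(\Omega)$; subtracting the weak forms and testing with $\Psi_1-\Psi_2$ gives
\begin{equation}\nonumber
\int_\Omega a\,|\nabla(\Psi_1-\Psi_2)|^2\,d\bx=-\int_\Omega\big(f(\bx,\Psi_1)-f(\bx,\Psi_2)\big)(\Psi_1-\Psi_2)\,d\bx.
\end{equation}
The left side is nonnegative while, by monotonicity of $f$, the right side is nonpositive; hence both vanish, $\nabla(\Psi_1-\Psi_2)=0$, and the zero boundary trace forces $\Psi_1=\Psi_2$.

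Positivity is where the two cases split, and is the crux of the difficulty. When $\theta<-1$ one has $-\theta>1$, so $f$ is Lipschitz at the origin: on $[0,M_0]$ one gets $f(\bx,z)\le Cz$ for some constant $C\ge 0$, whence $-\nabla\cdot a\nabla\Psi+C\Psi\ge 0$. As $\Psi\ge 0$ is not identically zero, the strong maximum principle (weak Harnack inequality) gives $\Psi>0$ in $\Omega$ unconditionally. When $-1<\theta<0$ the exponent $-\theta\in(0,1)$ makes $f$ merely H\"older at $0$ with $f'(0^+)=+\infty$, the strong maximum principle fails, and the sublinear term can create a dead core $\{\Psi=0\}$ of positive measure; this is exactly why a largeness threshold on $g$ is required. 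To exclude it I would build a strictly positive subsolution $\underline\Psi=m-C^*\chi$, where $-\nabla\cdot a\nabla\chi=1$, $\chi|_{\partial\Omega}=0$, $m=(\inf_{\partial\Omega}g)^{\frac{2}{1+\theta}}$, and $C^*=\|b\|_{L^\infty}m+\|c\|_{L^\infty}m^{-\theta}\ge\sup_{[0,m]}f$; then $\nabla\cdot a\nabla\underline\Psi=C^*\ge f(\bx,\underline\Psi)$, $\underline\Psi\le g^{\frac{2}{1+\theta}}$ on $\partial\Omega$, and comparison (using monotonicity of $f$) gives $\underline\Psi\le\Psi$. The main obstacle is then the quantitative balance $m-C^*\sup_\Omega\chi>0$: since $1+\theta>0$, the elevation $m$ grows like $(\inf_{\partial\Omega}g)^{\frac{2}{1+\theta}}$ and dominates the reaction-induced dip $C^*\sup_\Omega\chi$ once $\inf_{\partial\Omega}g$ exceeds a threshold $g_0=g_0(\Omega,a,b,c,\theta)$, giving $\underline\Psi>0$ and hence $\Psi>0$ a.e. Making this threshold estimate airtight — controlling the dip against the boundary elevation — is the delicate point that separates Case 1 from Case 2.
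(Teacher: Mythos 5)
Your existence, uniqueness, and $\sup_\Omega\Psi=\sup_{\partial\Omega}\Psi$ arguments are sound, and your Case 2 positivity proof (bound $f(\bx,z)\le Cz$ on $[0,M_0]$ using $-(1+\theta)>0$, then apply the strong maximum principle to $-\nabla\cdot a\nabla+C$) is correct and arguably cleaner than the paper's, which instead builds a linear comparison function solving $-\nabla\cdot a\nabla\underline{\Psi}+(b+c\nu)\underline{\Psi}=0$ with $\nu=(\sup_{\partial\Omega}g)^{-(1+\theta)}$ and invokes its comparison lemma. Your existence route (truncation to $[0,M_0]$ plus ordered sub/supersolutions and Schauder) also differs from the paper's, which minimizes the strictly convex functional $\int_\Omega \frac{1}{2}a|\nabla w|^2+\frac{b}{2}w^2+\frac{c}{1-\theta}|w|^{-(1+\theta)}w^2\,d\bx$; both are legitimate. (Minor slip: for $\theta<-1$ the exponent $\frac{2}{1+\theta}$ is negative, so the correct constant is $M_0=\sup_{\partial\Omega}g^{\frac{2}{1+\theta}}=(\inf_{\partial\Omega}g)^{\frac{2}{1+\theta}}$, not $(\sup_{\partial\Omega}g)^{\frac{2}{1+\theta}}$.)

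The genuine gap is in your Case 1 ($-1<\theta<0$) positivity argument. With $C^*=\|b\|_{L^\infty}m+\|c\|_{L^\infty}m^{-\theta}$, the requirement $m>C^*\sup_\Omega\chi$ reads $m\,(1-\|b\|_{L^\infty}\sup_\Omega\chi)>\|c\|_{L^\infty}m^{-\theta}\sup_\Omega\chi$. Only the $c$-part of the dip is sublinear in $m$; the $b$-part $\|b\|_{L^\infty}m\sup_\Omega\chi$ grows at exactly the same linear rate as the elevation $m$, so the elevation does \emph{not} dominate for large $m$ unless $\|b\|_{L^\infty}\sup_\Omega\chi<1$. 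Whenever $\|b\|_{L^\infty}\sup_\Omega\chi\ge 1$ (large $b$, or a large domain), your $\underline{\Psi}$ fails to be positive for every $m$, and no threshold $g_0$ exists for this construction — so the central claim of Case 1 is not established. The fix is to let the elevation itself solve away the $b$-term rather than absorbing $b$ into $C^*$: take $\underline{\Psi}=m\varphi-C^*\chi$ where $-\nabla\cdot a\nabla\varphi+b\varphi=0$, $\varphi|_{\partial\Omega}=1$ (so $0<\zeta\le\varphi\le 1$ by the strong maximum principle) and $C^*=\|c\|_{L^\infty}m^{-\theta}$; the subsolution inequality then only needs $C^*(1+b\chi)\ge c\,\underline{\Psi}^{-\theta}$, which holds since $\underline{\Psi}\le m$, and positivity reduces to $m^{1+\theta}>\|c\|_{L^\infty}\sup_\Omega\chi/\zeta$, achievable for large $m$ because $1+\theta>0$. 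The paper avoids the issue differently, with the subsolution $\kappa|\bx|^{2m}$ (origin outside $\overline{\Omega}$): there the term $4am^2$, quadratic in the \emph{exponent} $m$, beats $-b|\bx|^2$ once $m$ is large, after which taking $\kappa$ large absorbs the $c$-term, again using $1+\theta>0$.
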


The proof will be divided into several lemmas. Firstly, we show a weak solution exists and is unique.
  
\begin{lemma}\label{lm:1}
Under the same hypotheses of Proposition \ref{pr:1}, the equation~\eqref{eq:key} admits a unique weak solution $\Psi\in H^1(\Omega)$.
\end{lemma}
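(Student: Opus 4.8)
The plan is to realize the weak solution of~\eqref{eq:key} as the unique minimizer of a convex coercive energy, reading off existence from the direct method of the calculus of variations and uniqueness from the monotonicity of the nonlinearity. Fix an $H^1(\Omega)\cap L^\infty(\Omega)$ extension $G$ of the boundary datum $g^{\frac{2}{1+\theta}}$ (bounded and bounded away from zero since $K_0<g<K_1$), seek $\Psi$ in the affine class $\mathcal A:=G+H^1_0(\Omega)$, and note that $\Psi\in\mathcal A$ is a weak solution precisely when
\[
\int_\Omega a\,\nabla\Psi\cdot\nabla\varphi\,d\bx+\int_\Omega f(\bx,\Psi)\,\varphi\,d\bx=0\qquad\text{for all }\varphi\in H^1_0(\Omega),
\]
which is the Euler--Lagrange equation of
\[
E(\Psi):=\tfrac12\int_\Omega a\,|\nabla\Psi|^2\,d\bx+\int_\Omega F(\bx,\Psi)\,d\bx,\qquad F(\bx,z):=\int_0^z f(\bx,s)\,ds=\tfrac{b}{2}z^2+\tfrac{c}{1-\theta}|z|^{1-\theta}.
\]
Since $\theta<0$ in both cases, the map $z\mapsto|z|^{-(1+\theta)}z=\sgn(z)\,|z|^{-\theta}$ is nondecreasing on $\bbR$, so with $b,c\ge0$ the nonlinearity $f(\bx,\cdot)$ is nondecreasing and its primitive $F(\bx,\cdot)$ is convex and nonnegative (note $1-\theta>1$). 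Convexity of $F$ makes $E$ convex, while nonnegativity of $F$ together with $a=D_x>K_0$ and the Poincar\'e inequality on $H^1_0(\Omega)$ makes $E$ coercive on $\mathcal A$.

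Existence then follows from the direct method: a minimizing sequence is $H^1$-bounded, a subsequence converges weakly in $H^1$ and, by Rellich, a.e.; the Dirichlet term is weakly lower semicontinuous by convexity and $\int_\Omega F(\bx,\cdot)$ by Fatou since $F\ge0$. This produces a minimizer $\Psi\in\mathcal A$. To promote it to a genuine weak solution I need $f(\bx,\Psi)$ to pair against every $\varphi\in H^1_0(\Omega)$, which is immediate once $\Psi\in L^\infty(\Omega)$. I would obtain $0\le\Psi\le M$ with $M:=\mathrm{ess\,sup}_{\partial\Omega}g^{\frac{2}{1+\theta}}>0$ by truncating the competitor: the two-sided truncation $T_M\Psi:=\max\{-M,\min\{M,\Psi\}\}$ lies in $\mathcal A$ (as $|g^{\frac{2}{1+\theta}}|\le M$ on $\partial\Omega$), does not increase the Dirichlet energy, and does not increase $\int_\Omega F$ because $F(\bx,\cdot)$ is even and nondecreasing in $|z|$; minimality forces $|\Psi|\le M$, and the sign relation $f(\bx,z)z\ge0$ likewise gives $\Psi\ge0$ (test with $\Psi^-$). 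With $\Psi\in L^\infty$ we have $f(\bx,\Psi)\in L^\infty(\Omega)$, so $E$ is G\^ateaux differentiable at $\Psi$ and the Euler--Lagrange identity holds.

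Uniqueness I would deduce directly from monotonicity rather than from the functional. If $\Psi_1,\Psi_2$ are two weak solutions (both bounded by the estimate just obtained), then $\Psi_1-\Psi_2\in H^1_0(\Omega)$ is admissible, and subtracting the two weak formulations gives
\[
\int_\Omega a\,|\nabla(\Psi_1-\Psi_2)|^2\,d\bx+\int_\Omega\bigl(f(\bx,\Psi_1)-f(\bx,\Psi_2)\bigr)(\Psi_1-\Psi_2)\,d\bx=0.
\]
Both integrands are nonnegative---the first because $a>K_0>0$, the second because $f(\bx,\cdot)$ is nondecreasing---so each vanishes; the first forces $\nabla(\Psi_1-\Psi_2)\equiv0$, and the zero trace then yields $\Psi_1=\Psi_2$.

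The main obstacle is Case~(2), $\theta<-1$, where $1-\theta>2$ makes the problem superlinear and, in three dimensions, possibly Sobolev-supercritical: then $\int_\Omega F(\bx,\Psi)$ need not be finite for arbitrary $\Psi\in H^1(\Omega)$, so $E$ is only a proper convex functional and one can neither differentiate it naively nor invoke Browder--Minty on $H^1\to H^{-1}$. The a priori $L^\infty$ bound above is precisely what removes this difficulty, confining the solution to a range on which the nonlinearity is bounded; the delicate point is to verify carefully that the truncated competitor is admissible and strictly lowers the energy unless the bound already holds. In Case~(1), $-1<\theta<0$, the potential is subquadratic and these issues disappear, so there the argument is routine.
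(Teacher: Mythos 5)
Your proposal is correct and takes essentially the same route as the paper: both cast \eqref{eq:key} as the Euler--Lagrange equation of the same convex coercive energy $\int_\Omega \bigl(\tfrac{1}{2}a|\nabla w|^2+\tfrac{b}{2}w^2+\tfrac{c}{1-\theta}|w|^{1-\theta}\bigr)\,d\bx$ over the affine class with trace $g^{\frac{2}{1+\theta}}$, and obtain the weak solution as the unique minimizer via the direct method. The only differences are refinements within that framework: you handle the supercritical case $\theta<-1$ explicitly through the truncation/$L^\infty$ bound (where strictly one should first derive the Euler--Lagrange identity for test functions in $H^1_0(\Omega)\cap L^\infty(\Omega)$ and then extend by density), and you prove uniqueness by monotonicity of $f(\bx,\cdot)$ rather than by strict convexity, whereas the paper subsumes both points in its growth conditions with $\ell=\max(2,1-\theta)$ and the cited standard results.
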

\begin{proof}
We define a variation functional $I[w]$ on the function space \\
$\mathcal{W}=\{w|w\in H^{1}(\Omega) \text{ and }w|_{\partial\Omega}= g^{\frac{2}{1+\theta}}\}$ as follows:
\begin{equation}
I\left[w\right] := \int_{\Omega} L(\bx, w, \nabla w)d\bx = \int_{\Omega} \left(\frac{1}{2}a|\nabla w|^2 + \frac{b}{2} w^2 + 
\frac{c}{1-\theta}|w|^{-(1+\theta)}w^2 \right)d\bx,
\end{equation} 
where $L(\bx, z, \bp)$ denotes the Lagrangian for~\eqref{eq:key}. One can verify that $I\left[w\right]:\mathcal{W}\to\bbR$ is strictly convex and differentiable at $w\in\mathcal{W}$ since $b \geq 0$ and $c \geq 0$, and its derivative is given by
\begin{equation}
I\left[w\right]' v = \int_{\Omega} \left(a\nabla w\cdot \nabla v +bwv +c|w|^{-(1+\theta)}wv\right) d\bx.
\end{equation}
Let $\ell = \max(2, 1-\theta)$, the Lagrangian $L$ satisfies following growth conditions:
\begin{equation}
\begin{aligned}
|L(\bx,\bz,\bp)|&\le K_2 (1 + |z|^{\ell} + |\bp|^{\ell}),\\
|D_z L(\bx,\bz,\bp)|&\le K_2(1 + |z|^{\ell-1}),\\
|D_p L(\bx, \bz, \bp)|&\le K_2(1 + |\bp|^{\ell-1}),
\end{aligned}
\end{equation}
for all $(\bx,\bz,\bp)\in\Omega\times \bbR\times \bbR^d$ and some constant $K_2>0$. From the results in~\cite{lcevans1998pde,gilbarg2015elliptic}, there exists a unique solution $\Psi\in \mathcal{W}$ satisfies
\begin{equation}\nonumber
I\left[\Psi\right] = \min_{w\in \mathcal{W}} I\left[w\right],
\end{equation}
and $\Psi \in  \mathcal{W}$ is the unique weak solution to~\eqref{eq:key}.
\end{proof}

Secondly, we prove a comparison result that is necessary to establish the non-negativity of the unique weak solution. 
Recall that $u \in H^1(\Omega)$ (\textit{resp.} $v \in H^1(\Omega)$) is called a weak \textit{supersolution} (\textit{resp.} \textit{subsolution}) to \eqref{eq:key} if 
\begin{align*}
 & \int_\Omega a \nabla u \cdot \nabla \phi \, d\bx \geq  - \int_\Omega f(\bx, u) \phi \, d\bx > -\infty \\
resp. \quad & \int_\Omega a \nabla v \cdot \nabla \phi \, d\bx \leq  - \int_\Omega f(\bx, v) \phi \, d\bx < \infty
\end{align*}
for any $\phi \in H^1_0(\Omega)$, $\phi \geq 0$ a.e.. The minus sign in front of $f$ comes from the negativity of the operator $\nabla \cdot a \nabla \cdot$.

\begin{lemma}\label{lm:2}
	Under the same hypotheses of Proposition~\ref{pr:1}, 
if $u$ and $v$ are weak supersolution and subsolution to \eqref{eq:key} respectively with $u\ge v$ on $\partial\Omega$ in the trace sense, then $u\ge v$ a.e. in $\Omega$.
\end{lemma}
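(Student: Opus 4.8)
The plan is to run the classical comparison argument: test both weak inequalities against the nonnegative function $w:=(v-u)^+=\max(v-u,0)$ and exploit the monotonicity of $f(\bx,\cdot)$. First I would verify that $w$ is admissible. Since $u,v\in H^1(\Omega)$ we have $w\in H^1(\Omega)$, and because $u\ge v$ on $\partial\Omega$ in the trace sense the trace of $w$ vanishes, so $w\in H^1_0(\Omega)$ with $w\ge0$; thus $w$ is a legitimate test function in both defining inequalities.

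Taking $\phi=w$ in the supersolution inequality for $u$ and in the subsolution inequality for $v$, and subtracting the former from the latter, I would obtain $\int_\Omega a\,\nabla(v-u)\cdot\nabla w\,d\bx\le\int_\Omega\big(f(\bx,u)-f(\bx,v)\big)w\,d\bx$. On the left, the facts $\nabla w=\nabla(v-u)$ a.e.\ on $\{v>u\}$ and $\nabla w=0$ elsewhere give $\int_\Omega a\,\nabla(v-u)\cdot\nabla w\,d\bx=\int_\Omega a|\nabla w|^2\,d\bx\ge K_0\|\nabla w\|_{L^2(\Omega)}^2$, using $a=D_x>K_0>0$ from assumption A-2. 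Hence it remains only to show the right-hand side is nonpositive.

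The key structural ingredient is that $z\mapsto f(\bx,z)=bz+c\,|z|^{-(1+\theta)}z$ is nondecreasing for a.e.\ $\bx$. The linear part is nondecreasing because $b\ge0$. For the power part, set $h(z)=|z|^{-(1+\theta)}z=\sgn(z)|z|^{-\theta}$; then $h$ is continuous and vanishes at $0$, and a direct computation gives $h'(z)=-\theta\,|z|^{-(1+\theta)}$ for $z\ne0$. In both cases of Proposition~\ref{pr:1} one has $\theta<0$, so $-\theta>0$ and $h'(z)>0$, whence $h$ is nondecreasing on $\bbR$; together with $c\ge0$ this yields the claim. Consequently, on $\{v>u\}$ we have $w=v-u>0$ and $f(\bx,u)-f(\bx,v)\le0$, so the integrand $\big(f(\bx,u)-f(\bx,v)\big)w$ is $\le0$ there and vanishes off $\{v>u\}$, making the right-hand side nonpositive.

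Combining the two bounds gives $\int_\Omega a|\nabla w|^2\,d\bx\le0$, which forces $\nabla w=0$ a.e.; since $\Omega$ is connected by assumption A-1, $w$ is constant, and its vanishing trace together with the Poincaré inequality gives $w=0$ a.e., i.e.\ $v\le u$ a.e.\ in $\Omega$. I expect the monotonicity of $f$ to be the essential point, the sign of $-\theta$ being exactly what both hypotheses of Proposition~\ref{pr:1} supply. The only technical bookkeeping is ensuring the two nonlinear integrals may legitimately be subtracted: because the gradient integrals $\int_\Omega a\nabla u\cdot\nabla w$ and $\int_\Omega a\nabla v\cdot\nabla w$ are finite, the defining inequalities combined with the finiteness clauses in the definitions of super/subsolution force both $\int_\Omega f(\bx,u)w$ and $\int_\Omega f(\bx,v)w$ to be finite, so forming their difference is valid.
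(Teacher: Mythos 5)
Your proof is correct and follows essentially the same argument as the paper's: test both weak inequalities against $\phi=(v-u)^+\in H^1_0(\Omega)$ and use the monotonicity in $z$ of $bz+c|z|^{-(1+\theta)}z$, which holds because $b\ge 0$, $c\ge 0$, and $\theta<0$ in both cases of Proposition~\ref{pr:1}. The only cosmetic differences are that the paper keeps the $b$-term on the left (obtaining $\int_{\{v\geq u\}} a|\nabla(v-u)|^2+b|v-u|^2\,d\bx\le 0$) while you absorb it into the monotonicity of $f$, and your explicit Poincar\'e step at the end spells out a conclusion the paper leaves terse.
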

%
\begin{proof}
	Subtracting the two inequalities in the definition of $u$ and $v$, we see that for any $\phi\in H^1_0(\Omega)$, $\phi\geq 0$ a.e. ,
	\begin{equation}\label{eq:comp2}
	\int_\Omega \left[ a\nabla(v-u) \cdot \nabla\phi + b(v-u)\phi \right] \,d\bx \leq  \int_\Omega c(|u|^{-(1+\theta)}u - |v|^{-(1+\theta)}v) \phi \, d\bx.
	\end{equation}
	In particular, pick $\phi=(v-u)^+$ where $(v-u)^+:=\max(v-u,0)$, then $\phi\in H^1_0(\Omega)$ since $u \geq v$ on $\partial\Omega$, $\phi\geq 0$ a.e., and
	$$
	\nabla\phi = \left\{ 
	\begin{array}{ll}
	\nabla(v-u) &  \text{ a.e. on } \{v\geq u\} \\
	0 &  \text{ a.e. on } \{v\leq u\} .
	\end{array}	
	\right.
	$$
	Hence
	$$
	\int_{\{v\geq u\}}  a|\nabla(v-u)|^2  + b|v-u|^2 \,d\bx \leq 0,
	$$
	which implies $u \geq v$ a.e. in $\Omega$, since $a$ is bounded from below by a positive constant and $b \geq 0$, 
\end{proof}

Now we are ready to prove Proposition \ref{pr:1}. Existence and uniqueness of the weak solution has been ensured by Lemma \ref{lm:1}. It remains to verify the desired non-negativity condition.

\begin{proof}[Proof of Proposition \ref{pr:1}]
	From Lemma~\ref{lm:1}, there is a unique weak solution $\Psi\in H^1(\Omega)$ to the equation~\eqref{eq:key}. On the other hand, $\Psi \equiv 0$ is also a solution to~\eqref{eq:key} with zero Dirichlet boundary condition. Lemma~\ref{lm:2} then implies $\Psi\ge 0$ a.e. in $\Omega$. Since we have
		\begin{equation}\nonumber
	-\nabla\cdot a \nabla \Psi =- f(\bx, \Psi) \le 0,~~~\text{ a.e. in }\Omega,
	\end{equation}
	then $\Psi$ is a subsolution to the linear equation $\nabla \cdot a \nabla u = 0$, and by the maximum principle $\sup_{\overline{\Omega}}\Psi = \sup_{\partial\Omega}\Psi = \sup_{\partial\Omega} g^{\frac{2}{1+\theta}} < \infty$. Without loss of generality, we assume $\Omega$ does not contain the origin so that there exist constants $l_0, l_1$ with $0<l_0 <|\bx| < l_1<\infty$ for all $\bx\in\Omega$,
	\begin{enumerate}
		\item 	when $-1 <\theta < 0$, there exists a subsolution to \eqref{eq:key} of the form $\underline{\Psi} = \kappa |\bx|^{2m}$. Indeed, being a subsolution means
		\begin{equation*}
		\kappa^{1+\theta} \Big(4am^2 + 2\big(\nabla a\cdot \bx +a(d-2)\big)m  -b|\bx|^2\Big) > c|\bx|^{2-2m(1+\theta)}.
		\end{equation*}
		Since $a$ and $|\nabla a|$ are bounded, we can always find $m>0, \kappa>0$ to make the inequality hold. Let $g_0 = \sup_{\partial\Omega}\underline{\Psi}$. If $g > g_0$, we conclude $\Psi \geq \underline{\Psi} > 0 $ a.e. by Lemma~\ref{lm:2}.
		\item when $\theta <-1$, let $\nu = (\sup_{\partial\Omega}g)^{-(1+\theta)} > 0$ and consider the equation
		\begin{equation*}
		\begin{aligned}
		-\nabla\cdot a\nabla \underline{\Psi} + (b + c\nu)\underline{\Psi} &= 0\quad\text{ in }\Omega ,\\
		\underline{\Psi} &= g\quad\text{ on }\partial\Omega.
		\end{aligned}
		\end{equation*}
		Since $\underline{\Psi} \le \sup_{\partial\Omega}g$ by the maximum principle, $\underline{\Psi}$ is actually a subsolution to \eqref{eq:key}, therefore $\Psi\ge \underline{\Psi} > 0$ due to strong maximum principle.
	\end{enumerate}

\end{proof}

\subsection{Case (2): $\theta \geq 0$}

When $\theta > 0$, the nonlinear term $f(\bx, z)$ in~\eqref{eq:key} has a singularity at $z = 0$. Before stating our result, we remark on two special values of $\theta$.

If $\theta = 0$, the equation~\eqref{eq:key} becomes the standard linear elliptic PDE $\nabla\cdot a \nabla \Psi = b\Psi + c$. Suppose $0$ is not an eigenvalue of the elliptic operator $-\nabla\cdot a\nabla + b$ with Dirichlet boundary condition, then there is a unique solution $\Psi$. 

If $\theta = 1$, then $\gamma_x=0$ from~\eqref{eq:theta}, and the internal data $Q$ in~\eqref{eq:hamilton1} has only the lower order term. In this circumstance, the equation~\eqref{eq:key} does not guarantee a unique solution when $c>0$, but two well chosen boundary conditions $(g_1,g_2)$ suffice to reconstruct $\sigma_{x,f}$ from the corresponding internal data $(Q_1,Q_2)$, see~\cite{bal2011non} for more details.

%
%
%
%
%

We therefore have to impose conditions on the signs of $b$ and $c$ in order to obtain a unique solution. Following the method in~\cite{crandall1977dirichlet}, we prove
\begin{proposition}\label{pr:3}
	If $\theta \geq 0$ and $b \geq 0$, $c \le 0$ a.e. in $\bx\in\Omega$,  
 then there is a unique positive weak solution $\Psi\in H^1(\Omega)$ to the equation~\eqref{eq:key}.
\end{proposition}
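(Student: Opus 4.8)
The plan is to obtain uniqueness from the monotonicity of the nonlinearity and existence by the method of sub- and supersolutions in the spirit of \cite{crandall1977dirichlet}. The whole difficulty is the singularity of $f(\bx,\cdot)$ at the origin when $\theta>0$, so the central task is to trap the solution between two positive barriers that stay bounded away from zero; once that is done, $f$ is regular along the relevant range of values and everything reduces to standard arguments.

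\textbf{Uniqueness.} First I would record that for $z>0$ the nonlinearity $f(\bx,z)=bz+c|z|^{-(1+\theta)}z = bz + cz^{-\theta}$ is nondecreasing in $z$, since $\partial_z f = b - c\theta z^{-\theta-1} = b + |c|\theta z^{-\theta-1}\ge 0$ under the hypotheses $b\ge 0$, $c\le 0$, $\theta\ge 0$. Given two positive weak solutions $\Psi_1,\Psi_2$, I would subtract their weak formulations, test with $\phi=(\Psi_1-\Psi_2)^+\in H^1_0(\Omega)$ (admissible because the two solutions share the boundary value $g^{\frac{2}{1+\theta}}$), and use $a\ge K_0>0$ together with the sign $(f(\bx,\Psi_1)-f(\bx,\Psi_2))(\Psi_1-\Psi_2)^+\ge 0$ to conclude $\int_{\{\Psi_1>\Psi_2\}} a|\nabla(\Psi_1-\Psi_2)|^2\le 0$, hence $(\Psi_1-\Psi_2)^+\equiv 0$. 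Interchanging the roles of $\Psi_1$ and $\Psi_2$ gives $\Psi_1=\Psi_2$. This is precisely the comparison mechanism of Lemma~\ref{lm:2}, now valid because the pair of signs $(\theta\ge 0,\ c\le 0)$ reproduces the same favorable monotonicity that $(\theta<0,\ c\ge 0)$ produced there.

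\textbf{Existence.} I would build an ordered pair of barriers. As a subsolution, take $\underline{\Psi}$ solving the linear problem $-\nabla\cdot a\nabla\underline{\Psi}+b\underline{\Psi}=0$ in $\Omega$ with $\underline{\Psi}=g^{\frac{2}{1+\theta}}$ on $\partial\Omega$; since $b\ge 0$ and the boundary datum is positive, the weak maximum principle gives $\underline{\Psi}\ge 0$ and the strong maximum principle upgrades this to $\underline{\Psi}\ge\delta>0$ on $\overline{\Omega}$ for some constant $\delta$. Substituting back gives $\nabla\cdot a\nabla\underline{\Psi}-f(\bx,\underline{\Psi})=-c\,\underline{\Psi}^{-\theta}\ge 0$, so $\underline{\Psi}$ is indeed a subsolution. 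As a supersolution, take $\overline{\Psi}$ solving $-\nabla\cdot a\nabla\overline{\Psi}+b\overline{\Psi}=|c|\,\delta^{-\theta}$ with the same boundary datum; linear comparison yields $\overline{\Psi}\ge\underline{\Psi}\ge\delta$, and because $\overline{\Psi}^{-\theta}\le\delta^{-\theta}$ with $c\le 0$, one checks $-\nabla\cdot a\nabla\overline{\Psi}+f(\bx,\overline{\Psi})\ge |c|\delta^{-\theta}+c\,\delta^{-\theta}=0$, so $\overline{\Psi}$ is a supersolution.

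With the ordered pair $\delta\le\underline{\Psi}\le\overline{\Psi}$ in hand, the nonlinearity $f(\bx,\cdot)$ is Lipschitz on the interval $[\delta,\ \sup_\Omega\overline{\Psi}]$ in which all relevant values lie, so the singularity is entirely avoided. I would then run the standard monotone iteration: choose $K$ large enough that $z\mapsto Kz-f(\bx,z)$ is nondecreasing on $[\delta,\ \sup_\Omega\overline{\Psi}]$ (possible since $\partial_z f$ is bounded there), set $\Psi_0=\overline{\Psi}$, and solve $-\nabla\cdot a\nabla\Psi_{n+1}+K\Psi_{n+1}=K\Psi_n-f(\bx,\Psi_n)$ successively; the sequence decreases monotonically, stays in $[\underline{\Psi},\overline{\Psi}]$, and converges in $H^1(\Omega)$ to a weak solution $\Psi$ of \eqref{eq:key} with $\Psi\ge\delta>0$, so positivity is automatic. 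The main obstacle throughout is exactly the control of the lower bound $\delta$: without a strictly positive subsolution the term $c\Psi^{-\theta}$ is undefined and both the comparison step and the iteration break down, so the crux of the whole argument is the maximum-principle lower bound on $\underline{\Psi}$.
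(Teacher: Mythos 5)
Your proposal is correct, and while your uniqueness half essentially coincides with the paper's argument (both subtract the weak formulations, test with $(\Psi_1-\Psi_2)^+$, and use that $z\mapsto bz+cz^{-\theta}$ is nondecreasing in $z>0$ when $b\ge 0$, $c\le 0$, $\theta\ge 0$), your existence half takes a genuinely different route. The paper, following \cite{crandall1977dirichlet}, never confronts the singular nonlinearity directly: it regularizes it as $-c(\Psi_n+\frac{1}{n})^{-\theta}$, solves the family of non-singular problems \eqref{eq:keyn} in Lemma~\ref{lm:4} (each by Perron's method between the subsolution $\varphi$ and a supersolution $v_n$ whose right-hand side $-c\,n^{\theta}$ grows with $n$), establishes the two monotonicity relations $\Psi_n\ge\Psi_m$ and $\Psi_n+\frac{1}{n}\le\Psi_m+\frac{1}{m}$ for $n>m$, deduces that $\{\Psi_n\}$ is Cauchy in $L^2(\Omega)$ and then, via the elliptic estimate, in $H^1(\Omega)$, and passes to the limit in \eqref{eq:keyn}. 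You instead observe that the same linear solution $\varphi$ (your $\underline{\Psi}$) is already a subsolution of the singular problem \eqref{eq:key} itself, bounded below by $\delta>0$ thanks to the strong maximum principle and the standing assumption $g>K_0>0$ from A-2, and that this $\delta$ lets you manufacture a finite supersolution $\overline{\Psi}$ from the right-hand side $|c|\delta^{-\theta}$; on the order interval $[\delta,\sup_\Omega\overline{\Psi}]$ the nonlinearity is Lipschitz, so one pass of the classical monotone iteration produces the solution, with positivity automatic. What your approach buys is brevity and transparency: no regularization parameter, no Cauchy-sequence limiting argument, and a solution trapped from the outset in a region where $f$ is regular. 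What the paper's approach buys is robustness: the $\frac{1}{n}$-regularization does not rely on the boundary datum being bounded away from zero, and it is the mechanism that survives in settings (e.g., zero Dirichlet data, as in \cite{crandall1977dirichlet}) where no subsolution uniformly bounded away from the singular set exists; here A-2 rules that situation out, so your shortcut is legitimate. One shared caveat: both proofs tacitly assume that an arbitrary competing positive weak solution has $c\Psi^{-\theta}$ sufficiently integrable for the weak formulation to be tested as claimed, i.e., uniqueness is asserted within the class of positive solutions for which the equation makes distributional sense; your write-up is no less rigorous than the paper's on this point.
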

The proof of Proposition~\ref{pr:3} is based on the following lemma.
\begin{lemma}\label{lm:4}
	Under the same hypothesis of Proposition~\ref{pr:3}, let $\{\Psi_n\}_{n\in\bbN}$ satisfy the following equations
	\begin{equation}\label{eq:keyn}
	\begin{aligned}
	-\nabla\cdot a\nabla \Psi_n + b\Psi_n &= -c(\Psi_n + \frac{1}{n})^{-\theta},\quad&\text{ in }&\Omega\\
	\Psi_n &= g^{\frac{2}{1+\theta}},\quad&\text{ on }&\partial\Omega.
	\end{aligned}
	\end{equation}
Then we have
\begin{enumerate}
	\item There exists a unique positive weak solution $\Psi_n\in H^1(\Omega)$ to the equation~\eqref{eq:keyn}. 
	\item Each unique positive weak solution $\Psi_n$ is bounded from below and above, 
	\begin{equation}\nonumber
	v_n \ge \Psi_n(\bx) \ge \varphi(\bx) > 0.
	\end{equation} 
	where $\varphi(\bx)\in H^1(\Omega)$ is the weak solution to the linear elliptic equation
	\begin{equation}\label{eq:keyp}
	\begin{aligned}
	-\nabla\cdot a\nabla \varphi + b\varphi &= 0,\quad&\text{ in }&\Omega\\
	\varphi &= g^{\frac{2}{1+\theta}},\quad&\text{ on }&\partial\Omega
	\end{aligned}
	\end{equation}
	and $v_n\in H^1(\Omega)$ is the weak solution to the linear elliptic equation
	\begin{equation}\label{eq:keyv}
	\begin{aligned}
	-\nabla\cdot a\nabla v_n + bv_n &= -c(0 + \frac{1}{n})^{-\theta},\quad&\text{ in }&\Omega\\
	v_n &= g^{\frac{2}{1+\theta}},\quad&\text{ on }&\partial\Omega
	\end{aligned}
	\end{equation}
	\item For $n > m\ge 1$, we have
	\begin{equation}\nonumber
	\Psi_n \ge \Psi_m  \quad \text{ and } \quad  \Psi_n + \frac{1}{n} \le \Psi_m + \frac{1}{m} \quad\quad \text{ a.e. in } \Omega.
	\end{equation}
\end{enumerate}
\end{lemma}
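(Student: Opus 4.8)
The plan is to exploit the fact that, for each fixed $n$, the regularized nonlinearity $t\mapsto -c(t+\tfrac1n)^{-\theta}$ is bounded (since $t+\tfrac1n\ge\tfrac1n$) and that the sign conditions $b\ge 0$, $c\le 0$, $\theta\ge 0$ make the zeroth-order part of the equation monotone. Writing \eqref{eq:keyn} as $-\nabla\cdot a\nabla\Psi_n = G_n(\Psi_n)$ with $G_n(t)=-c(t+\tfrac1n)^{-\theta}-bt$, one checks $G_n'(t)=c\theta(t+\tfrac1n)^{-\theta-1}-b\le 0$, so $G_n$ is nonincreasing. This yields a comparison principle for \eqref{eq:keyn}, proved exactly as in Lemma~\ref{lm:2}: if $\underline u,\overline u$ are a sub- and a supersolution with $\underline u\le\overline u$ on $\partial\Omega$, testing the difference of the weak inequalities against $(\underline u-\overline u)^+$ and using the monotonicity of $G_n$ forces $\underline u\le\overline u$ in $\Omega$. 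This single comparison principle is the engine for all three assertions.

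For existence together with the bounds in item (2), I would run the method of sub- and supersolutions between $\varphi$ and $v_n$. The function $\varphi$ from \eqref{eq:keyp} is a subsolution of \eqref{eq:keyn}, because $-\nabla\cdot a\nabla\varphi+b\varphi=0\le -c(\varphi+\tfrac1n)^{-\theta}$, the last inequality holding since $c\le 0$; the strong maximum principle gives $\varphi>0$ in $\Omega$ from the positive Dirichlet datum. The function $v_n$ from \eqref{eq:keyv} is a supersolution, because $v_n\ge 0$ (by the maximum principle, as its forcing $-c(0+\tfrac1n)^{-\theta}\ge 0$) gives $(v_n+\tfrac1n)^{-\theta}\le(\tfrac1n)^{-\theta}$, whence $-c(0+\tfrac1n)^{-\theta}\ge -c(v_n+\tfrac1n)^{-\theta}$. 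Comparing $\varphi$ and $v_n$ gives $\varphi\le v_n$, so the standard monotone-iteration/sub-supersolution theorem produces a weak solution $\Psi_n$ with $\varphi\le\Psi_n\le v_n$; this is precisely item (2) and also yields the positivity $\Psi_n\ge\varphi>0$ asserted in item (1). Uniqueness in item (1) is then immediate from the comparison principle applied to two solutions in both directions.

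For the monotonicity in item (3) I would again compare, but against the neighbouring equation. For $\Psi_n\ge\Psi_m$ with $n>m$: since $\tfrac1n<\tfrac1m$ and $-c\ge 0$, one has $-c(\Psi_m+\tfrac1m)^{-\theta}\le -c(\Psi_m+\tfrac1n)^{-\theta}$, so $\Psi_m$ is a subsolution of the $n$-th equation \eqref{eq:keyn}, which has the same boundary datum, and comparison gives $\Psi_m\le\Psi_n$. The second inequality is cleanest after the substitution $a_k:=\Psi_k+\tfrac1k$, which satisfies $-\nabla\cdot a\nabla a_k+b\,a_k=\tfrac bk-c\,a_k^{-\theta}$ with boundary value $g^{2/(1+\theta)}+\tfrac1k$. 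For $n>m$ the constant forcing obeys $\tfrac bn\le\tfrac bm$, so $a_n$ is a subsolution of the equation solved by $a_m$, and its boundary datum $g^{2/(1+\theta)}+\tfrac1n$ is smaller than $g^{2/(1+\theta)}+\tfrac1m$; the same comparison principle (the reformulated nonlinearity is again monotone) yields $a_n\le a_m$, i.e. $\Psi_n+\tfrac1n\le\Psi_m+\tfrac1m$. The main obstacle is this last inequality: it is false in the raw variable $\Psi_k$ and only becomes a routine comparison after passing to the shifted variable $a_k$, where the $n$-dependence is isolated into the constant forcing term $\tfrac bk$ and the boundary value. The remaining work---verifying the growth and Lipschitz hypotheses needed to invoke the sub-supersolution theorem on the fixed range $[\inf_\Omega\varphi,\sup_\Omega v_n]\subset(0,\infty)$---is routine, since the regularized nonlinearity is Lipschitz there.
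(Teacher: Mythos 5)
Your proposal is correct and takes essentially the same route as the paper: existence via the sub/supersolution pair $(\varphi, v_n)$ together with the monotone sub-/supersolution (Perron) method, uniqueness and the comparison $\Psi_m \le \Psi_n$ by testing differences of weak forms against positive parts using the monotonicity of the regularized nonlinearity. Your shifted-variable argument for $\Psi_n + \tfrac{1}{n} \le \Psi_m + \tfrac{1}{m}$ via $a_k = \Psi_k + \tfrac{1}{k}$ is precisely the paper's rewriting of the weak form, where the constant forcing $b(\tfrac{1}{m}-\tfrac{1}{n})$ appears and one tests against $\bigl(\Psi_n + \tfrac{1}{n} - \Psi_m - \tfrac{1}{m}\bigr)^+$.
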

\begin{proof}
First we prove the existence of positive weak solution to~\eqref{eq:keyn}. It is obvious that there exist a positive constant $\zeta > 0$ such that the solution $\varphi$ to~\eqref{eq:keyp} satisfies $\varphi > \zeta$ by strong maximum principle, therefore $\varphi$ is a weak subsolution to~\eqref{eq:keyn}. On the other hand, the solution $v_n$ to~\eqref{eq:keyv} is a weak supersolution to~\eqref{eq:keyn} and $v_n \ge \varphi$. Let $r_n(\bx, z) = -c(\bx) (z+n^{-1})^{-\theta}$, then $|\partial_z r_n(\bx, z)| = |c(\bx) \theta z^{-\theta - 1}|$ is bounded for $v_n\ge z\ge \varphi$ for all $\bx \in\overline{\Omega}$ and $|r_n|$ is dominated by function $|c(\bx) \varphi^{-\theta}|\in L^2(\Omega)$. Therefore by Perron's method~\cite[Chapter 9.3]{lcevans1998pde}, there exists a positive weak solution $\Psi_n\in H^1(\Omega)$ to~\eqref{eq:keyn} and $v_n \ge\Psi_n\ge \varphi > 0$.

As for the uniqueness, suppose there are two positive weak solutions $\Psi_n$ and $\widetilde{\Psi}_n$; subtract the weak forms of the equation \eqref{eq:keyn} to get that, for any $\phi\in H^1_0(\Omega)$, $\phi \geq 0$ a.e. ,
$$
\int_\Omega \left[ a \nabla (\Psi_n-\tilde{\Psi}_n) \cdot \nabla \phi + b (\Psi_n - \tilde{\Psi}_n) \phi \right] \, d\bx = \int_\Omega \left[ r_n(\bx, \Psi_n) - r_n(\bx, \tilde{\Psi}_n) \right] \phi \, d\bx. 
$$ 
In particular, take $\phi=(\Psi_n-\tilde{\Psi}_n)^+$, then $\phi \in H^1_0(\Omega)$ since $\Psi_n = \tilde{\Psi}_n$ on $\partial\Omega$. The right hand side is non-positive since $r_n(\bx, z)$ is non-increasing in $z$, that is,  
$r_n(\bx, \Psi_n) \leq r_n(\bx, \tilde{\Psi}_n)$ on $\{\Psi_n \geq \tilde{\Psi}_n\}$. Argue as in the proof of Lemma \ref{lm:2} to get
$$
\int_{\{\Psi_n \geq \tilde{\Psi}_n\}} a |\nabla (\Psi_n-\tilde{\Psi}_n)|^2  + b |\Psi_n - \tilde{\Psi}_n|^2  \, d\bx \leq 0,
$$ 
hence $\Psi_n \leq \tilde{\Psi}_n$ a.e. in $\Omega$. Switching the role of $\Psi_n$ and $\tilde{\Psi}_n$ yields the equality.

For the third part of the lemma, suppose $n > m \ge 1$. We apply the same technique for proving the uniqueness. Subtract the weak forms of the equations for $\Psi_n$ and $\Psi_m$ to get for any $\phi\in H^1_0(\Omega)$, $\phi \geq 0$ a.e. that,
\begin{equation} \label{eq:mn}
\int_\Omega \left[ a \nabla (\Psi_m - \Psi_n) \cdot \nabla \phi + b (\Psi_m - \Psi_n) \phi \right] \, d\bx = \int_\Omega \left[ r_m(\bx, \Psi_m) - r_n(\bx, \Psi_n) \right] \phi \, d\bx. 
\end{equation} 
Choose $\phi=(\Psi_m - \Psi_n)^+ \, \in H^1_0(\Omega)$ and observe that $r_m(\bx, \Psi_m) \leq r_n(\bx, \Psi_n)$ on $\{\Psi_m \geq \Psi_n\}$ since the function $x^{-\theta}$ is non-increasing for $x>0$. Repeat the argument in the uniqueness part to see that $\Psi_m \leq \Psi_n$ a.e. in $\Omega$.

Next, rewrite \eqref{eq:mn} as
\begin{align*}
 & \int_\Omega \left[ a \nabla \left( (\Psi_m+\frac{1}{m}) - (\Psi_n+\frac{1}{n}) \right) \cdot \nabla \phi + b \left((\Psi_m+\frac{1}{m}) - (\Psi_n+\frac{1}{n}) \right) \phi \right] \, d\bx \vspace{1ex} \\
 = & \int_\Omega b \, (\frac{1}{m} - \frac{1}{n}) \phi + \left[ r_m(\bx, \Psi_m) - r_n(\bx, \Psi_n) \right] \phi \, d\bx. 
\end{align*}
This time we choose $\phi=(\Psi_n + \frac{1}{n} - \Psi_m - \frac{1}{m})^+ \, \in H^1_0(\Omega)$. As $\Psi_n + \frac{1}{n} \geq \Psi_m + \frac{1}{m}$ implies $\Psi_n \geq \Psi_m$ hence $r_n(x,\Psi_n) \leq r_m(x,\Psi_m)$, the right hand side is non-negative on $\{\Psi_n + \frac{1}{n} \geq \Psi_m + \frac{1}{m}\}$. We obtain
$$
- \int_{\{\Psi_n + \frac{1}{n} \geq \Psi_m + \frac{1}{m}\}}  a  | \ \nabla (\Psi_m+\frac{1}{m}) - \nabla (\Psi_n+\frac{1}{n}) |^2  + b |(\Psi_m+\frac{1}{m}) - (\Psi_n+\frac{1}{n}) |^2 \, d\bx \geq 0,
$$
showing $\Psi_n + \frac{1}{n} \leq \Psi_m + \frac{1}{m}$ a.e. in $\Omega$.

%
\end{proof}
Now, we are ready to prove Proposition~\ref{pr:3}.
\begin{proof}[Proof of Proposition \ref{pr:3}]
From Lemma \ref{lm:4}, we can construct a sequence $\{\Psi_n\}_{n\in\mathbb{N}}$ where $\Psi_n$ is the unique positive weak solution to~\eqref{eq:keyn}. Take $\Psi_n$ and $\Psi_m$ from this sequence with $n>m\geq 1$, then $0\leq \Psi_n - \Psi_m \leq \frac{1}{m} - \frac{1}{n}$ a.e., hence $\Psi_n - \Psi_m$ is a Cauchy sequence in $L^2(\Omega)$.
Moreover, $\Psi_n - \Psi_m$ satisfies 
\begin{align*}
-\nabla \cdot a \nabla (\Psi_n - \Psi_m) + b (\Psi_n - \Psi_m) & = -c(\Psi_n + \frac{1}{n})^{-\theta} + c(\Psi_m + \frac{1}{m})^{-\theta} & \text{ in } \Omega \\
\Psi_n - \Psi_m & = 0 & \text{ on } \partial\Omega.
\end{align*}
By the $H^1$ bound for the solution
\begin{align*}
\|\Psi_n - \Psi_m\|_{H^1(\Omega)} & \leq K \| -c(\Psi_n + \frac{1}{n})^{-\theta} + c(\Psi_m + \frac{1}{m})^{-\theta} \|_{L^2(\Omega)} \\
 & \leq K \|c\|_{L^\infty(\Omega)} \| \Psi_n - \Psi_m + \frac{1}{n} - \frac{1}{m} \|_{L^2(\Omega)} \\
 & \leq K \|c\|_{L^\infty(\Omega)} ( \| \Psi_n - \Psi_m \|_{L^2(\Omega)} + \| \frac{1}{n} - \frac{1}{m} \|_{L^2(\Omega)} )
\end{align*}
we see that $\Psi_n - \Psi_m$ is a Cauchy sequence in $H^1(\Omega)$ as well. Here $K=K(d,a,b,\partial\Omega)>0$ is a constant, and the second inequality is valid as the function $x^{-\theta}$ is Lipschitz continuous when $x>0$ is bounded away from $0$. Then there exists $\Psi\in H^1(\Omega)$ such that $\Psi_n \rightarrow \Psi$ in $H^1(\Omega)$ and $(c\Psi_n+\frac{1}{n})^{-\theta} \rightarrow c\Psi^{-\theta}$ in $L^2(\Omega)$ as $n\rightarrow\infty$. Taking the limit in \eqref{eq:keyn} implies $\Psi$ is a positive weak solution to~\eqref{eq:key}.
As $\Psi_n$ is increasing with respect to $n$, we have $\Psi \geq \Psi_n \geq \varphi >0$ where $\varphi$ is the subsolution in Lemma \ref{lm:4}. The uniqueness part can be proved as in Lemma \ref{lm:4}.
\end{proof}

\section{Reconstruction of $\sigma_{x,f}$: stability}\label{SEC:REC1_2}


So far, we have established in Proposition \ref{pr:1} and Proposition \ref{pr:3} the existence and uniqueness of a positive weak solution to \eqref{eq:key} in the following two cases:
\begin{itemize}
\item[(1)] $-1 \neq \theta < 0$, $b \geq 0$ and $c \geq 0$ a.e. in $\Omega$;
\item[(2)] $\theta \geq 0$, $b \geq 0$ and $c \leq 0$ a.e. in $\Omega$.
\end{itemize}
In Case (1), the additional boundary condition $g>g_0$ on $\partial\Omega$ is required for some $g_0>0$.  We show stable dependence of the solution on the coefficient $c$ in this section. The above two cases can be simultaneously treated for this purpose.

\begin{lemma}\label{lm:5}
In either Case (1) or Case (2), suppose $\Psi_1$ and $\Psi_2$ are the unique positive weak solutions to the equation~\eqref{eq:key} with nonlinear term's coefficient as $c_1$ and $c_2$ respectively. Then there exists a constant $K_5 > 0$ such that
	\begin{equation}\nonumber
	\|\Psi_1 - \Psi_2\|_{H^{1}(\Omega)} \le K_5 \|c_1 - c_2\|_{L^{2}(\Omega)}.
	\end{equation}
\end{lemma}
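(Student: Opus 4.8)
The plan is to derive an energy estimate for the difference $\delta := \Psi_1 - \Psi_2$. Since $\Psi_1$ and $\Psi_2$ solve \eqref{eq:key} with the \emph{same} Dirichlet data $g^{\frac{2}{1+\theta}}$, the difference $\delta$ belongs to $H^1_0(\Omega)$. Writing the weak form of \eqref{eq:key} for each solution, and recalling that for positive $\Psi_i$ the nonlinearity reads $c_i|\Psi_i|^{-(1+\theta)}\Psi_i = c_i\Psi_i^{-\theta}$, I would subtract the two weak formulations to get, for every $\phi\in H^1_0(\Omega)$,
\begin{equation}\nonumber
\int_\Omega a\nabla\delta\cdot\nabla\phi\,d\bx = -\int_\Omega b\,\delta\,\phi\,d\bx - \int_\Omega\big(c_1\Psi_1^{-\theta}-c_2\Psi_2^{-\theta}\big)\phi\,d\bx.
\end{equation}
I would then test with $\phi=\delta$ itself (admissible, as $\delta\in H^1_0$) and split the last integrand as $c_1\Psi_1^{-\theta}-c_2\Psi_2^{-\theta} = c_1\big(\Psi_1^{-\theta}-\Psi_2^{-\theta}\big) + (c_1-c_2)\Psi_2^{-\theta}$, which produces
\begin{equation}\nonumber
\int_\Omega a|\nabla\delta|^2 + b\,\delta^2\,d\bx + \int_\Omega c_1\big(\Psi_1^{-\theta}-\Psi_2^{-\theta}\big)\delta\,d\bx = -\int_\Omega(c_1-c_2)\Psi_2^{-\theta}\,\delta\,d\bx.
\end{equation}

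The crucial observation is that the middle term has a favorable sign in both cases and may be discarded. Since $\delta=\Psi_1-\Psi_2$, its integrand equals $c_1\big(\Psi_1^{-\theta}-\Psi_2^{-\theta}\big)(\Psi_1-\Psi_2)$. The map $x\mapsto x^{-\theta}$ is increasing when $\theta<0$ (Case (1), where $c_1\ge0$) and non-increasing when $\theta\ge0$ (Case (2), where $c_1\le0$); in either situation the sign of $c_1$ matches that of $(\Psi_1^{-\theta}-\Psi_2^{-\theta})(\Psi_1-\Psi_2)$, so $\int_\Omega c_1(\Psi_1^{-\theta}-\Psi_2^{-\theta})\delta\,d\bx\ge0$. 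This is the same monotonicity mechanism already exploited in Lemma \ref{lm:2} and Lemma \ref{lm:4}. Dropping this nonnegative term leaves
\begin{equation}\nonumber
\int_\Omega a|\nabla\delta|^2 + b\,\delta^2\,d\bx \le -\int_\Omega(c_1-c_2)\Psi_2^{-\theta}\,\delta\,d\bx.
\end{equation}

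Finally I would close the estimate by coercivity and the Poincar\'e inequality. As $a\ge K_0>0$ and $b\ge0$, the left side dominates $K_0\|\nabla\delta\|_{L^2(\Omega)}^2$. For the right side, the positivity and boundedness of $\Psi_2$ furnished by Proposition \ref{pr:1} and Proposition \ref{pr:3} (through Lemma \ref{lm:4}) guarantee $\Psi_2^{-\theta}\in L^\infty(\Omega)$: when $\theta\ge0$ this relies on the uniform lower bound $\Psi_2\ge\varphi>0$, while when $\theta<0$ only the upper bound from the maximum principle is needed. Hence, by Cauchy--Schwarz together with Poincar\'e applied to $\delta\in H^1_0(\Omega)$,
\begin{equation}\nonumber
K_0\|\nabla\delta\|_{L^2(\Omega)}^2 \le \|\Psi_2^{-\theta}\|_{L^\infty(\Omega)}\,\|c_1-c_2\|_{L^2(\Omega)}\,\|\delta\|_{L^2(\Omega)} \le C\,\|c_1-c_2\|_{L^2(\Omega)}\,\|\nabla\delta\|_{L^2(\Omega)}.
\end{equation}
Dividing by $\|\nabla\delta\|_{L^2(\Omega)}$ and invoking Poincar\'e once more to control $\|\delta\|_{L^2(\Omega)}$ yields $\|\delta\|_{H^1(\Omega)}\le K_5\|c_1-c_2\|_{L^2(\Omega)}$, with $K_5$ depending on $K_0$, the Poincar\'e constant, and the uniform bounds on $\Psi_2$. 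The main obstacle is the sign analysis of the middle term: one must correctly pair the monotonicity direction of $x\mapsto x^{-\theta}$ with the prescribed sign of $c$ in each case, and separately secure the $L^\infty$ bound on $\Psi_2^{-\theta}$, for which the uniform positivity of $\Psi_2$ (available and needed precisely in Case (2)) is essential.
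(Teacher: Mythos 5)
Your proposal is correct and follows essentially the same route as the paper: an energy estimate for $\Psi_1-\Psi_2$ tested against itself, with the cross term $c\,(\Psi_1^{-\theta}-\Psi_2^{-\theta})(\Psi_1-\Psi_2)$ discarded by the monotonicity-of-$x^{-\theta}$/sign-of-$c$ pairing, and the remaining term controlled via the uniform bounds on $\Psi_i^{-\theta}$, coercivity, and Poincar\'e. The only (immaterial) difference is that you split the nonlinearity as $(c_1-c_2)\Psi_2^{-\theta}+c_1(\Psi_1^{-\theta}-\Psi_2^{-\theta})$ while the paper uses the symmetric grouping $(c_1-c_2)\Psi_1^{-\theta}+c_2(\Psi_1^{-\theta}-\Psi_2^{-\theta})$.
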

\begin{proof}
	Given $\theta \neq -1$, there exist constants $K_3, K_4 > 0$ such that the inequality $K_4 \le \Psi_1^{-\theta}, \Psi_2^{-\theta} \le K_3$ holds a.e. in $\Omega$ by Proposition~\ref{pr:1},  Proposition~\ref{pr:3}, and the standard local estimate for elliptic PDEs~\cite{gilbarg2015elliptic}. The difference $\omega := \Psi_1 - \Psi_2$ solves
	\begin{equation}\nonumber
	\begin{aligned}
	-\nabla \cdot a \nabla \omega + b \omega &= -(c_1-c_2)\Psi_1^{-\theta} - c_2 (\Psi_1^{-\theta} - \Psi_2^{-\theta}),\quad &\text{ in }&\Omega\\
	\omega &= 0.\quad &\text{ on }&\partial\Omega
	\end{aligned}
	\end{equation}
	Multiply the above equation by $\omega$ and integrate over $\Omega$:
		\begin{equation}\nonumber
		\int_{\Omega} a|\nabla \omega|^2 + b|\omega|^2 + c_2(\Psi_1^{-\theta} - \Psi_2^{-\theta})\omega \, d\bx = \int_{\Omega} -(c_1 - c_2)\Psi_1^{-\theta}\omega \, d\bx.
		\end{equation}
	Since $c_2(\Psi_1^{-\theta} - \Psi_2^{-\theta})\omega \ge 0$ in either case, there exists a constant $K_5 > 0$ so that
		\begin{equation}
		\|\omega\|_{H^1(\Omega)}\le K_5 \|c_1-c_2\|_{L^2(\Omega)}.
		\end{equation}
\end{proof}

\begin{remark}
It is easy to see the positive solutions $\Psi_1$ and $\Psi_2$ also satisfy 
\begin{equation}\label{eq:psi}
\|\Psi_1^{\frac{1+\theta}{2}} - \Psi_2^{\frac{1+\theta}{2}}\|_{H^1(\Omega)} \le K_{7}\|\Psi_1 - \Psi_2\|_{H^1(\Omega)}
\end{equation}
for some constant $K_7 > 0$.
\end{remark}

Now we prove a stability estimate on the reconstruction of $\sigma_{x,f}$ with the help of the above lemma. 

\begin{theorem}\label{thm:1}
In either Case (1) or Case (2), suppose $Q_1$ and $Q_2$ are the internal data with the absorption coefficients $\sigma_{x,f,1}$ and $\sigma_{x,f,2}$ respectively. Then 
there exists a constant $K_{14} > 0$ such that
\begin{equation}\nonumber
\|\sigma_{x,f,1} - \sigma_{x,f,2}\|_{L^1(\Omega)} \le K_{14}\left(\|Q_1 - Q_2\|_{L^1(\Omega)} + \|Q_1 - Q_2\|^2_{L^2(\Omega)}\right).
\end{equation}
\end{theorem}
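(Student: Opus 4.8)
The plan is to reduce the stability of $\sigma_{x,f}$ to the stability of the intermediate solution $\Psi$ (equivalently $u_0$), already established in the previous section, and then to control the genuinely nonlinear dependence of $\sigma_{x,f}$ on $\nabla u_0$. Throughout, write $u_{0,i}=\Psi_i^{(1+\theta)/2}$ for the excitation field associated with the datum $Q_i$, and set $\delta:=u_{0,1}-u_{0,2}$. Since $Q_i\mapsto c_i=\frac{2}{1+\theta}\frac{Q_i}{\beta_f}$ is linear, Lemma~\ref{lm:5} together with the estimate~\eqref{eq:psi} (recall $\Psi^{(1+\theta)/2}=u_0$) gives $\|\delta\|_{H^1(\Omega)}\le C\|Q_1-Q_2\|_{L^2(\Omega)}$, with $C$ depending only on the background data. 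Moreover, by Proposition~\ref{pr:1}/\ref{pr:3} and the elliptic regularity invoked in Lemma~\ref{lm:5}, each $u_{0,i}$ is bounded above and below by positive constants and lies in $C^{1,\alpha}(\overline\Omega)$, so all denominators appearing below are uniformly under control.

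Next I would read the reconstruction formula off \eqref{eq:hamilton1}, namely $\beta_f\sigma_{x,f}\,u_0^2=Q-\gamma_xD_x|\nabla u_0|^2-\beta_x\sigma_{x,a}u_0^2$. Subtracting the two copies ($i=1,2$) and isolating the difference gives
\begin{equation*}
\beta_f(\sigma_{x,f,1}-\sigma_{x,f,2})u_{0,1}^2=(Q_1-Q_2)-\gamma_xD_x\big(|\nabla u_{0,1}|^2-|\nabla u_{0,2}|^2\big)-(\beta_x\sigma_{x,a}+\beta_f\sigma_{x,f,2})\big(u_{0,1}^2-u_{0,2}^2\big).
\end{equation*}
Dividing by $\beta_f u_{0,1}^2$ and taking $L^1$ norms, the term $Q_1-Q_2$ yields directly the $\|Q_1-Q_2\|_{L^1}$ contribution, while $u_{0,1}^2-u_{0,2}^2=(u_{0,1}+u_{0,2})\delta$ is a lower-order difference to be absorbed into the same contribution. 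For the nonlinear gradient term I would use
\begin{equation*}
|\nabla u_{0,1}|^2-|\nabla u_{0,2}|^2=|\nabla\delta|^2+2\,\nabla u_{0,2}\cdot\nabla\delta.
\end{equation*}
The first summand is genuinely quadratic, $\big\||\nabla\delta|^2\big\|_{L^1}=\|\nabla\delta\|_{L^2}^2\le\|\delta\|_{H^1}^2\le C\|Q_1-Q_2\|_{L^2}^2$, and this is precisely the $\|Q_1-Q_2\|_{L^2}^2$ term in the claim.

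The main obstacle is the cross term $2\,\nabla u_{0,2}\cdot\nabla\delta$. A naive Cauchy--Schwarz bound gives only $\|\nabla u_{0,2}\|_{L^2}\|\nabla\delta\|_{L^2}\le C\|Q_1-Q_2\|_{L^2}$, which is \emph{linear} in the $L^2$ norm and therefore too weak for the stated estimate. To bring it down to the level of $\|Q_1-Q_2\|_{L^1}$ I would integrate by parts, moving the derivative off $\delta$ and using $\delta=0$ on $\partial\Omega$ (both $u_{0,i}$ share the Dirichlet datum $g$). Since $\nabla\cdot(D_x\nabla u_{0,2})=(\sigma_{x,a}+\sigma_{x,f,2})u_{0,2}$ from \eqref{eq:u0}, this converts the cross term into lower-order integrals of the form $\int(\cdots)\delta$, which one then controls through the equation satisfied by $\delta$, namely
\begin{equation*}
-\nabla\cdot D_x\nabla\delta+(\sigma_{x,a}+\sigma_{x,f,1})\delta=-(\sigma_{x,f,1}-\sigma_{x,f,2})\,u_{0,2}\quad\text{in }\Omega,\qquad \delta=0\ \text{on }\partial\Omega,
\end{equation*}
together with a duality argument and the $H^1$ bound on $\delta$.

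Carrying out this bookkeeping so that the linear-in-$\delta$ contributions either collapse into the $\|Q_1-Q_2\|_{L^1}$ term or are absorbed onto the left-hand side, leaving only $|\nabla\delta|^2$ as a genuine quadratic remainder, is the delicate step, and it is exactly where the precise combination $\|Q_1-Q_2\|_{L^1}+\|Q_1-Q_2\|_{L^2}^2$ in the statement originates. Once this is in place, the remaining pieces — the direct $Q$-difference, the $u_{0,1}^2-u_{0,2}^2$ term, and the division by the uniformly positive $\beta_f u_{0,1}^2$ — are routine given the uniform two-sided bounds on $u_{0,i}$ and the $H^1$ stability of $\delta$.
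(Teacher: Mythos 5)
Your overall route coincides with the paper's: both start from the reconstruction formula derived from \eqref{eq:hamilton1}, both invoke the uniform two-sided bounds on $u_{0,i}$ and the $H^1$ stability of Lemma~\ref{lm:5} together with \eqref{eq:psi}, and both split the gradient difference so that $|\nabla\delta|^2$ produces the $\|Q_1-Q_2\|_{L^2}^2$ term. Moreover, you have correctly put your finger on the one genuinely delicate point: the terms \emph{linear} in $\delta$ and $\nabla\delta$ (your cross term $2\nabla u_{0,2}\cdot\nabla\delta$, and also the zeroth-order term coming from $u_{0,1}^2-u_{0,2}^2$) cannot be bounded by $\|Q_1-Q_2\|_{L^1}+\|Q_1-Q_2\|_{L^2}^2$ via Cauchy--Schwarz and Lemma~\ref{lm:5} alone. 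It is worth noting that the paper's own proof does not address this at all: it asserts the pointwise bound $|\sigma_{x,f,2}-\sigma_{x,f,1}|\le K_{12}\left(|Q_1-Q_2|+|\delta|^2+|\nabla\delta|^2\right)$ in which these linear terms simply do not appear, even though the exact algebraic identity forces them to be present; so your diagnosis exposes a step the paper glosses over.

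However, your proposed fix does not close the gap. The quantity to be estimated is $\int_\Omega|\cdot|\,d\bx$, and one cannot integrate by parts inside an absolute value: by duality, $\|A\delta+\bB\cdot\nabla\delta\|_{L^1}=\sup\left\{\int_\Omega (A\delta+\bB\cdot\nabla\delta)\,\phi \,d\bx \,:\, \|\phi\|_{L^\infty}\le 1\right\}$, and the admissible $\phi$ are merely bounded, so the derivative cannot be moved off $\delta$ onto $\phi$; this also defeats the subsequent use of $\nabla\cdot(D_x\nabla u_{0,2})=(\sigma_{x,a}+\sigma_{x,f,2})u_{0,2}$. Likewise, ``absorbing onto the left-hand side'' would require the operator $\rho\mapsto (A+\bB\cdot\nabla)\left(-\nabla\cdot D_x\nabla+\sigma_{x,a}+\sigma_{x,f,1}\right)^{-1}(\rho\, u_{0,2})$ to have $L^1\to L^1$ norm below $|\beta_f|$, which you do not establish and which is false in general; and even your zeroth-order term needs $\|\delta\|_{L^1}\lesssim\|Q_1-Q_2\|_{L^1}$, which Lemma~\ref{lm:5} does not give (it only yields the linear $L^2$ bound you yourself reject as too weak). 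What would actually complete the argument is an elliptic estimate with $L^1$ data: $\omega=\Psi_1-\Psi_2$ solves $-\nabla\cdot a\nabla\omega+(b+V)\omega=-(c_1-c_2)\Psi_1^{-\theta}$ with $V\ge 0$ bounded, so Green's function gradient bounds (equivalently, $W^{1,q}$ regularity, $q<d/(d-1)$, for $L^1$ right-hand sides) give $\|\delta\|_{W^{1,1}(\Omega)}\le C\|c_1-c_2\|_{L^1(\Omega)}\le C'\|Q_1-Q_2\|_{L^1(\Omega)}$; then both linear terms integrate directly into the $\|Q_1-Q_2\|_{L^1}$ contribution and the quadratic remainder is handled by the $H^1$ bound, yielding exactly the stated combination. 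Neither your sketch nor the paper supplies this ingredient, so as written your proof has a genuine gap at its central step.
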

\begin{proof}
	Given a positive solution $\Psi$ to~\eqref{eq:key}, then $u_0 = \Psi^{\frac{1+\theta}{2}}$ (see~\eqref{eq:theta}), and 
$\sigma_{x,f}$ can be reconstructed through (see~\eqref{eq:hamilton1})
	\begin{equation} \label{eq:sigmaxf}
	\sigma_{x,f} = \frac{Q - \gamma_x D_x |\nabla u_0|^2 - \beta_x\sigma_{x,a}|u_0|^2}{\beta_f |u_0|^2}.
	\end{equation}

	Since both $\Psi_1$ and $\Psi_2$ are bounded from below and above by positive constants, as explained in the proof of Lemma \ref{lm:5}, one can find constants $K_9, K_{10}, K_{11} >0$ such that 
	\begin{equation}\nonumber
		K_9 > u_{0,1}, u_{0,2} > K_{10}, \text{ and } K_{11} > \|u_{0,1}\|_{H^1(\Omega)},\|u_{0,2}\|_{H^1(\Omega)}.
	\end{equation}
Then
	\begin{equation}\nonumber
	\begin{aligned}
	|\sigma_{x,f,2} - \sigma_{x,f,1}| &= \Big|\frac{1}{\beta_f}\frac{Q_2 |u_{0,1}|^2 - Q_1 |u_{0,2}|^2}{|u_{0,1}|^2 |u_{0,2}|^2} -\frac{\gamma_x D_x}{\beta_f}\frac{|\nabla u_{0,2}|^2 |u_{0,1}|^2 - |\nabla u_{0,1}|^2 |u_{0,2}|^2}{|u_{0,1}|^2 |u_{0,2}|^2}\Big|\\
	&\le K_{12}\left(|Q_1-Q_2| + |u_{0,1} - u_{0,2}|^2 + |\nabla (u_{0.1}  - u_{0,2})|^2\right)	
	\end{aligned}
	\end{equation}
for some constant $K_{12} > 0$. Integrating over $\Omega$ and making use of Lemma \ref{lm:5} and \eqref{eq:psi}, we conclude
	\begin{equation}\nonumber
	\begin{aligned}
	\|\sigma_{x,f,2} - \sigma_{x,f,1}\|_{L^1(\Omega)} &\le K_{12}\left(\|Q_1 - Q_2\|_{L^1(\Omega)} + \|u_{0,1} - u_{0,2}\|^2_{H^1(\Omega)}\right)\\
	&\le K_{13}\left(\|Q_1 - Q_2\|_{L^1(\Omega)} + \|c_1 - c_2\|^2_{L^2(\Omega)}\right) \\
	&\le K_{14}\left(\|Q_1 - Q_2\|_{L^1(\Omega)} + \|Q_1 - Q_2\|^2_{L^2(\Omega)}\right)
	\end{aligned}
	\end{equation}
	for some constants $K_{13}, K_{14} > 0$.

\end{proof}
\section{Reconstruction of $\sigma_{x,f}$: algorithms}\label{SEC:NUM}
We develop some algorithms\footnote{The algorithms and numerical experiments are all implemented in MATLAB, the code repository is at \href{https://github.com/lowrank/fumot}{https://github.com/lowrank/fumot}.} in this section to reconstruct $\sigma_{x,f}$. 
The key step is to find $\Psi$ in~\eqref{eq:key}, then $u_0 = \Psi^{\frac{1+\theta}{2}}$ and $\sigma_{x,f}$ can be reconstructed from \eqref{eq:sigmaxf}. The equation~\eqref{eq:key} is semilinear and one can solve it using iterative methods such as Newton's method. However, Newton-type methods generally converge fast but only have local convergence, unless other properties such as monotonicity or convexity are available~\cite{han1977globally,hiebert1982evaluation,ralph1994global}.  In order to guarantee the convergence, we propose the following simple but effective iterative schemes for~\eqref{eq:key} in the following three scenarios. Note that Case (1) in Section \ref{SEC:REC1} are divided into two sub-cases (1.1) and (1.2) here.
\begin{itemize}
	\item[(1.1)] $-1 < \theta < 0$, $b \geq 0$ and $c \ge 0$; 
	\item[(1.2)] $\theta < -1$, $b \geq 0$ and $c \ge 0$; 
	\item[(2)] $\theta \geq 0$, $b \geq 0$ and $c \leq 0$. 
\end{itemize}


\subsection{Case (1.1): $-1 < \theta < 0$, $b \geq 0$ and $c \ge 0$}

We propose the following iterative algorithm for this case.

\begin{algorithm}[hbt!]
	\caption{iterative method for $\Psi$ when $-1 < \theta < 0$, $b \geq 0$ and $c \ge 0$}\label{alg:iter1}
	\begin{algorithmic}[1]
		\STATE $r \gets 1$ 
		\STATE $n \gets 1$
	    \STATE Solve  $-\nabla \cdot a\nabla \Psi_0 + b \Psi_0 = 0$ with boundary condition $g^{\frac{2}{1+\theta}}$
		\WHILE{$r > \varepsilon$} 
		\STATE Solve $-\nabla\cdot a\nabla \Psi_n + b \Psi_n + c \Psi_{n-1}^{-(\theta + 1)} \Psi_n = 0$ with boundary condition $g^{\frac{2}{1+\theta}}$
		\STATE $r \gets \|\Psi_n - \Psi_{n-1}\|_{L^2(\Omega)}/\|\Psi_n\|_{L^2(\Omega)}$
		\STATE $n\gets n+1$
		\ENDWHILE
		\STATE \textbf{return} $\Psi_n$
	\end{algorithmic}
\end{algorithm}
\begin{theorem}\label{thm:3}
	If $-1 < \theta < 0$, $b \geq 0$ and $c \ge 0$, the sequence $\{\Psi_n\}_{n\ge 0}$ in Algorithm~\ref{alg:iter1} converges to the unique positive weak solution $\Psi$ of \eqref{eq:key}.
\end{theorem}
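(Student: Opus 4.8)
The plan is to realize the scheme in Algorithm~\ref{alg:iter1} as a \emph{monotone iteration} and to sandwich the iterates between the true solution $\Psi$ of~\eqref{eq:key} and the initial guess $\Psi_0$, then extract a limit and identify it with $\Psi$ by the uniqueness already proved in Proposition~\ref{pr:1}. Throughout I regard the inner solve as the linear Dirichlet problem
\begin{equation}\nonumber
-\nabla\cdot a\nabla \Psi_n + \big(b + c\,\Psi_{n-1}^{-(\theta+1)}\big)\Psi_n = 0 \text{ in }\Omega, \qquad \Psi_n = g^{\frac{2}{1+\theta}} \text{ on }\partial\Omega,
\end{equation}
whose zeroth-order coefficient $b + c\,\Psi_{n-1}^{-(\theta+1)}$ is non-negative because $b\ge 0$, $c\ge 0$ and $\Psi_{n-1}>0$. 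First I would check that each iterate is well defined and strictly positive: assuming inductively that $\Psi_{n-1}$ is bounded below by a positive constant, the coefficient lies in $L^\infty(\Omega)$, so the linear problem has a unique $H^1(\Omega)$ solution, and the maximum principle forces $\Psi_n>0$ since the boundary datum $g^{2/(1+\theta)}$ is positive.

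The core of the argument is a pair of comparisons driven by the monotonicity of $t\mapsto t^{-(\theta+1)}$, which is \emph{decreasing} on $(0,\infty)$ because $-1<\theta<0$ gives $\theta+1\in(0,1)$. For the upper barrier, note $\Psi_0$ solves the $c$-free equation while $\Psi$ satisfies $-\nabla\cdot a\nabla\Psi + b\Psi = -c\Psi^{-\theta}\le 0$; comparison via Lemma~\ref{lm:2} yields $\Psi_0\ge\Psi$. For monotonicity, subtracting the equations for $\Psi_n$ and $\Psi_{n+1}$ and writing $\omega=\Psi_n-\Psi_{n+1}$ gives, after adding and subtracting $c\,\Psi_n^{-(\theta+1)}\Psi_n$,
\begin{equation}\nonumber
-\nabla\cdot a\nabla\omega + \big(b + c\,\Psi_n^{-(\theta+1)}\big)\omega = -c\,\Psi_n\big(\Psi_{n-1}^{-(\theta+1)} - \Psi_n^{-(\theta+1)}\big);
\end{equation}
if $\Psi_{n-1}\ge\Psi_n$ then the right-hand side is non-negative by the decreasing power, and the maximum principle gives $\Psi_n\ge\Psi_{n+1}$. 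An identical computation comparing $\Psi_n$ with $\Psi$ shows, by induction, that $\Psi_n\ge\Psi$ for every $n$. Together with $\Psi\ge\underline{\Psi}=\kappa|\bx|^{2m}\ge\kappa l_0^{2m}>0$ from the proof of Proposition~\ref{pr:1}, this furnishes the uniform positive lower bound needed to close the induction in the previous paragraph.

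Having established that $\{\Psi_n\}$ is monotonically decreasing and trapped in $[\,\kappa l_0^{2m},\ \sup_{\partial\Omega}g^{2/(1+\theta)}\,]$, I would pass to the limit. Monotone (dominated) convergence gives $\Psi_n\to\Psi_\infty$ in $L^2(\Omega)$ with $\Psi_\infty\ge\Psi$; the uniform $L^\infty$ bound on the coefficients yields a uniform $H^1$ bound, so along a subsequence $\Psi_n\rightharpoonup\Psi_\infty$ weakly in $H^1(\Omega)$. Since $\Psi_{n-1}^{-(\theta+1)}\to\Psi_\infty^{-(\theta+1)}$ a.e.\ with an $L^\infty$ dominating bound, I can pass to the limit in the weak formulation of the inner solve to conclude that $\Psi_\infty$ is a positive weak solution of~\eqref{eq:key}; uniqueness in Proposition~\ref{pr:1} then forces $\Psi_\infty=\Psi$. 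Strong $H^1$ convergence finally follows from an energy estimate: testing the equation for $\omega_n=\Psi_n-\Psi$ against $\omega_n$ and using $a\ge K_0$ bounds $\|\nabla\omega_n\|_{L^2}^2$ by $\|c\,\Psi(\Psi_{n-1}^{-(\theta+1)}-\Psi^{-(\theta+1)})\|_{L^2}\,\|\omega_n\|_{L^2}\to 0$, exactly in the spirit of Lemma~\ref{lm:5}.

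The step I expect to be the main obstacle is securing the \emph{uniform} positive lower bound on the iterates: the negative power $\Psi_{n-1}^{-(\theta+1)}$ must stay in $L^\infty(\Omega)$ for the inner linear solves to be uniformly elliptic and for the comparison arguments to apply, and this relies on the explicit subsolution barrier $\underline{\Psi}$ from Proposition~\ref{pr:1}, which in turn is available only under the boundary hypothesis $g>g_0$ implicit in the existence of a positive $\Psi$. Getting the signs right in the two comparison identities—so that the decreasing monotonicity of $t^{-(\theta+1)}$ delivers a favorable right-hand side—is the other delicate point, but it becomes routine once the lower bound is in hand.
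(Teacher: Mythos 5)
Your proposal is correct and follows essentially the same route as the paper's proof: a monotone iteration argument showing $\{\Psi_n\}$ is decreasing and bounded below by $\Psi$, with both comparisons driven by the fact that $t\mapsto t^{-(\theta+1)}$ is decreasing, followed by passage to the limit and identification of the limit with $\Psi$ via the uniqueness in Proposition~\ref{pr:1}. Your treatment is somewhat more explicit than the paper's on two points the paper leaves implicit --- the uniform positive lower bound on the iterates (via the barrier $\underline{\Psi}$ under $g>g_0$) and the identification of the limit by passing to the weak formulation --- but these are refinements of the same argument, not a different one.
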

\begin{proof}
	The proof is an inductive process. The inequalities below should be interpreted in the weak sense by testing them on $\phi \in H^1_0(\Omega)$ with $\phi \geq 0$.
	
	First, we show the sequence $\{\Psi_n\}_{n\ge 0}$ is bounded from below by $\Psi$. For the base step, $\Psi_0$ satisfies
		\begin{equation}\nonumber
		-\nabla\cdot a\nabla \Psi_0 + b\Psi_0 + c\Psi_0^{-\theta} = c\Psi_0^{-\theta} \ge 0
		\end{equation}
	and Lemma \ref{lm:1} implies $\Psi_0 \ge \Psi$ a.e. in $\Omega$. For the inductive step, fix an integer $n$ and suppose we have proved $\Psi_m \ge \Psi$ for any $m<n$, then
		\begin{equation}\nonumber
		-\nabla \cdot a\nabla (\Psi_n-\Psi) + b(\Psi_n-\Psi) + c(\Psi_{n-1}^{-(\theta + 1)}\Psi_n - \Psi^{-(\theta+1)}\Psi) = 0
		\end{equation}\nonumber
Since $-(\theta + 1) < 0$, we have $\Psi_{n-1}^{-(\theta + 1)} \le \Psi^{-(\theta + 1)}$ hence 		
		\begin{equation}\nonumber
		-\nabla \cdot a\nabla (\Psi_n-\Psi) + b(\Psi_n-\Psi) + c\Psi_{n-1}^{-(\theta + 1)}(\Psi_n - \Psi) \ge 0.
		\end{equation}
Lemma \ref{lm:1} implies $\Psi_n \ge \Psi$ a.e. in $\Omega$. This completes the induction.

	Next, we show the sequence $\{\Psi_n\}_{n\ge 0}$ is decreasing. For the base step, $\Psi_0 - \Psi_1$ satisfies
		\begin{equation}\nonumber
		-\nabla \cdot a\nabla (\Psi_0 - \Psi_1) + b(\Psi_0-\Psi_1) = c\Psi_0^{-(\theta + 1)}\Psi_1 \ge 0,
		\end{equation}
		hence $\Psi_0 \ge \Psi_1$ by Lemma \ref{lm:1}. For the inductive step, fix an integer $n$ and suppose we have proved $\Psi_{m-1}\ge \Psi_m$ for any $m<n$. Subtracting the equations for $\Psi_n$ and $\Psi_{n-1}$ we have
		\begin{equation}\nonumber
		-\nabla \cdot a\nabla (\Psi_{n}-\Psi_{n-1}) + b(\Psi_n-\Psi_{n-1}) + c(\Psi_{n-1}^{-(\theta + 1)}\Psi_{n} - \Psi_{n-2}^{-(\theta+1)}\Psi_{n-1}) = 0.
		\end{equation}
		As $\Psi_{n-2}\ge \Psi_{n-1}$ by the inductive assumption, we obtain
		\begin{equation}\nonumber
		-\nabla \cdot a\nabla (\Psi_{n}-\Psi_{n-1}) + b(\Psi_n-\Psi_{n-1}) + c\Psi_{n-1}^{-(\theta + 1)}(\Psi_{n} - \Psi_{n-1}) \le 0,
		\end{equation}
		which implies $\Psi_n \le \Psi_{n-1}$. The induction is complete.
		
		We have showed the sequence $\{\Psi_n\}_{n\ge 0}$ is decreasing and bounded from below by $\Psi$, therefore $\{\Psi_n\}_{n\ge 0}$ converges uniformly to a limit, say $\tilde{\Psi}$, a.e. in $\Omega$. The convergence holds in $L^2(\Omega)$ by the dominant convergence theorem, and in $H^1(\Omega)$ by the elliptic regularity estimate. This forces $\tilde{\Psi} = \Psi$.
\end{proof}
\subsection{Case (1.2): $\theta < -1$, $b \geq 0$ and $c \ge 0$}
For this case, we propose the following algorithm.
\begin{algorithm}[hbt!]
	\caption{iterative method for $\Psi$ when $\theta < -1$, $b \geq 0$ and $c \ge 0$}\label{alg:iter2}
	\begin{algorithmic}[1]
		\STATE $r \gets 1$ 
		\STATE $n \gets 1$
		\STATE $\overline{h} \gets \max_{\bx\in\partial\Omega} g(\bx)^{\frac{2}{1+\theta}}$
		\STATE $\nu \gets -\theta \overline{h}^{-(1+\theta)}$
		\STATE Solve $-\nabla\cdot a \nabla\Psi_0 + (b + c \nu)\Psi_0 = 0$ with boundary condition $g^{\frac{2}{1+\theta}}$
		\WHILE{$r > \varepsilon$} 
		\STATE Solve $-\nabla\cdot a\nabla \Psi_n + (b+c \nu) \Psi_n =- c( \Psi_{n-1}^{-\theta} -\nu \Psi_{n-1})$ with boundary condition $g^{\frac{2}{1+\theta}}$
		\STATE $r \gets \|\Psi_n - \Psi_{n-1}\|_{L^2(\Omega)}/\|\Psi_n\|_{L^2(\Omega)}$
		\STATE $n\gets n+1$
		\ENDWHILE
		\STATE \textbf{return} $\Psi_n$
	\end{algorithmic}
\end{algorithm}
\begin{theorem}\label{thm:4}
	If $\theta < -1$, $b \geq 0$ and $c \ge 0$, the sequence $\{\Psi_n\}_{n\ge 0}$ in Algorithm~\ref{alg:iter2} converges to the unique positive weak solution $\Psi$ of \eqref{eq:key}.
\end{theorem}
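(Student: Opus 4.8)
The plan is to recast Algorithm~\ref{alg:iter2} as a monotone fixed-point iteration for the shifted linear operator $L := -\nabla\cdot a\nabla + (b + c\nu)$ and to show that the generated sequence increases while staying trapped below the true solution $\Psi$ furnished by Proposition~\ref{pr:1}. First I would record two structural facts that drive everything. Since $b,c \geq 0$ and $\Psi > 0$, the solution $\Psi$ of~\eqref{eq:key} is a subsolution of $\nabla\cdot a\nabla u = 0$, so the maximum principle gives $\Psi \leq \sup_{\partial\Omega} g^{\frac{2}{1+\theta}} = \overline{h}$ throughout $\Omega$; thus $\Psi$ takes values in $(0,\overline{h}]$. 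Second, writing the reduced nonlinearity $\psi(z) := z^{-\theta} - \nu z$ (so that for positive functions~\eqref{eq:key} reads $L\Psi = -c\,\psi(\Psi)$), I would check that the choice $\nu = -\theta\,\overline{h}^{-(1+\theta)}$ is exactly the slope of $z^{-\theta}$ at $z=\overline{h}$. Because $\theta<-1$ forces $-\theta>1$, the power $z^{-\theta}$ is convex, so $\psi'(z) = -\theta z^{-(1+\theta)} - \nu$ is increasing and vanishes at $z=\overline{h}$; hence $\psi$ is non-increasing on $(0,\overline{h}]$, and since $\psi(0^+)=0$ one gets $\psi\le 0$ there, i.e. $-c\,\psi(z)\ge 0$ on $(0,\overline{h}]$.

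With these in hand I would define the solution map $T$ sending $w\in(0,\overline{h}]$ to the solution of $L(Tw) = -c\,\psi(w)$ with boundary value $g^{\frac{2}{1+\theta}}$. Since $b+c\nu\ge 0$, the operator $L$ obeys a comparison principle, $T$ is well defined, and the monotonicity of $\psi$ makes $T$ order-preserving on $(0,\overline{h}]$: if $w_1\le w_2$ then $\psi(w_1)\ge\psi(w_2)$, so $-c\,\psi(w_1)\le -c\,\psi(w_2)$ and hence $Tw_1\le Tw_2$. The iterates are $\Psi_n = T\Psi_{n-1}$, and $\Psi$ is a fixed point $\Psi = T\Psi$. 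I would then run the usual monotone-iteration bookkeeping: comparing the source of $\Psi_0$ (namely $0$) against that of $\Psi$ (namely $-c\,\psi(\Psi)\ge 0$) gives $\Psi_0\le\Psi$, while comparing the sources of $\Psi_1$ and $\Psi_0$ gives $\Psi_1\ge\Psi_0$; an induction using the order-preservation of $T$ and the fixed-point identity then yields
$$
0 < \zeta \le \Psi_0 \le \Psi_1 \le \cdots \le \Psi_n \le \Psi \le \overline{h},
$$
where the uniform lower bound $\Psi_n\ge\Psi_0\ge\zeta>0$ comes from the strong maximum principle applied to $\Psi_0$.

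Finally I would pass to the limit. The sequence is monotone and bounded, hence converges a.e. to some $\tilde{\Psi}$ with $\zeta\le\tilde{\Psi}\le\Psi$, and dominated convergence upgrades this to convergence in $L^2(\Omega)$. Because all iterates remain in the compact interval $[\zeta,\overline{h}]$, on which $\psi$ is Lipschitz, the right-hand sides $-c\,\psi(\Psi_{n-1})$ converge in $L^2(\Omega)$; the standard $H^1$ a-priori estimate for $L$ applied to the differences $\Psi_n-\Psi_m$, exactly as in the proof of Proposition~\ref{pr:3}, then shows $\{\Psi_n\}$ is Cauchy in $H^1(\Omega)$. Passing to the limit in the weak formulation shows $\tilde{\Psi}$ solves~\eqref{eq:key}, and the uniqueness part of Proposition~\ref{pr:1} forces $\tilde{\Psi}=\Psi$.

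The main obstacle, and the feature genuinely special to $\theta<-1$, is getting the order-preservation of $T$ right: it rests on confining the entire iteration to $(0,\overline{h}]$ and on the convexity/tangent-line construction of $\nu$, both of which break down outside this regime. Everything afterward (the comparison principle, the elliptic $H^1$ bounds, and the passage to the limit) is routine once the sequence is known to be order-bounded and bounded away from zero.
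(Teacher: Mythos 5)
Your proof is correct and follows essentially the same route as the paper's: both confine the iteration to $[0,\overline{h}]$, use the choice of $\nu$ to make $z \mapsto \nu z - z^{-\theta}$ monotone there (your convexity/tangent-line observation is exactly the paper's statement that this function is increasing on $[0,\overline{h}]$), run a monotone induction via the comparison principle for $-\nabla\cdot a\nabla + (b+c\nu)$ to get $\Psi_0 \le \Psi_1 \le \cdots \le \Psi$, and pass to the limit by dominated convergence, the elliptic $H^1$ estimate, and uniqueness from Proposition~\ref{pr:1}. Packaging the scheme as an order-preserving solution operator $T$ is only a cosmetic difference from the paper's direct subtraction of the PDEs.
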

\begin{proof}
The assumption implies $-(1+\theta) > 0$. Recall that $\Psi$ satisfies the maximum principle $\Psi\le\overline{h}= \max_{\bx\in\partial\Omega} g(\bx)^{\frac{2}{1+\theta}}$ according to Proposition~\ref{pr:1}.

We prove $\Psi_n \leq \Psi$ a.e. by induction. For the base step,  the initial solution $\Psi_0$ satisfies
\begin{equation}
-\nabla \cdot a\nabla\Psi_0 + (b+c\Psi_0^{-(\theta + 1)}) \Psi_0 = c(\Psi_0^{-(\theta + 1)} - \nu)\Psi_0 \le 0,
\end{equation}
thus $\Psi_0 \le\Psi$ a.e. in $\Omega$ by Lemma \ref{lm:1}. For the inductive step, fix an integer $n$ and suppose we have proved $\Psi_m \le \Psi$ for all $m<n$, then
\begin{equation}
-\nabla\cdot a\nabla(\Psi - \Psi_n) + (b+c\nu)(\Psi - \Psi_n) = c\left((\nu\Psi - \Psi^{-\theta}) - (\nu\Psi_{n-1} - \Psi_{n-1}^{-\theta})\right)\ge 0.
\end{equation}
The choice of $\nu$ makes $f(z) = \nu z - z^{-\theta}$ an increasing function for $0\le z\le \overline{h}$, one therefore has $\Psi \ge \Psi_n$ by the maximum principle, completing the induction.

Next, we show the sequence $\{\Psi_n\}_{n\ge 0}$ is increasing. For the base step, $\Psi_1$ satisfies
\begin{equation}
-\nabla\cdot a\nabla (\Psi_1 - \Psi_0) + (b + c\nu)(\Psi_1 - \Psi_0) = c(\nu \Psi_1 - \Psi_1^{-\theta})\ge 0,
\end{equation}
hence $\Psi_1\ge \Psi_0$ a.e. by the maximum principle. For the inductive step, simply notice the assumption $\Psi_{n-1}\ge \Psi_{n-2}$ and
\small
\begin{equation}
-\nabla \cdot a\nabla (\Psi_n - \Psi_{n-1}) + (b + c\nu)(\Psi_n - \Psi_{n-1}) = c\left((\nu \Psi_{n-1} - \Psi_{n-1}^{-\theta}) - (\nu\Psi_{n-2} - \Psi_{n-2}^{-\theta})\right)\ge 0,
\end{equation}
\normalsize
implies $\Psi_{n} \ge \Psi_{n-1}$ a.e. in $\Omega$. A similar argument as in the proof of Theorem~\ref{thm:3} verifies the increasing and bounded sequence $\{\Psi_n\}_{n\ge 0}$ actually converges to $\Psi$.
\end{proof}
\subsection{Case (2): $\theta \geq 0$, $b \geq 0$ and $c \leq 0$}
We propose the following algorithm for this case.
\begin{algorithm}[hbt!]
	\caption{iterative method for $\Psi$ when $\theta \geq 0$, $b \geq 0$ and $c \leq 0$}\label{alg:iter3}
	\begin{algorithmic}[1]
		\STATE $r \gets 1$ 
		\STATE $n \gets 1$
		\STATE Solve $-\nabla\cdot a\nabla\Psi_0 + b\Psi_0 = 0$ with boundary condition $g^{\frac{2}{1+\theta}}$
		\STATE $\kappa \gets \min_{\bx\in\Omega} g(\bx)^{\frac{2}{1+\theta}}$
		\STATE $\nu \gets -\theta \kappa^{-(1+\theta)}$
		\WHILE{$r > \varepsilon$} 
		\STATE Solve $-\nabla\cdot a\nabla \Psi_n + (b +c \nu) \Psi_n =- c( \Psi_{n-1}^{-\theta} -\nu \Psi_{n-1})$ with boundary condition $g^{\frac{2}{1+\theta}}$
		\STATE $r \gets \|\Psi_n - \Psi_{n-1}\|_{L^2(\Omega)}/\|\Psi_n\|_{L^2(\Omega)}$
		\STATE $\kappa\gets \min_{\bx\in\Omega} \Psi_n(\bx)$ 
		\STATE $\nu \gets -\theta \kappa^{-(1+\theta)}$
		\STATE $n\gets n+1$
		\ENDWHILE
		\STATE \textbf{return} $\Psi^n$
	\end{algorithmic}
\end{algorithm}
\begin{theorem}\label{thm:5}
	If $\theta \geq 0$, $b \geq 0$ and $c \leq 0$, the sequence $\{\Psi_n\}_{n\ge 0}$ in Algorithm~\ref{alg:iter3} converges to the unique positive weak solution $\Psi$ of \eqref{eq:key}.
\end{theorem}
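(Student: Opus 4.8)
The plan is to run a monotone-iteration argument in the spirit of the proofs of Theorem~\ref{thm:3} and Theorem~\ref{thm:4}: I will show that the sequence $\{\Psi_n\}$ produced by Algorithm~\ref{alg:iter3} is increasing and bounded above by the unique positive weak solution $\Psi$ of~\eqref{eq:key}, and then pass to the limit. As preliminaries, observe that $\Psi_0$ is exactly the solution $\varphi$ of the linear problem~\eqref{eq:keyp}, so $\Psi_0=\varphi$ is strictly positive and satisfies $\Psi_0\le\Psi$ by the proof of Proposition~\ref{pr:3}. Writing $\kappa_n=\min_\Omega\Psi_n$ and $\nu_n=-\theta\kappa_n^{-(1+\theta)}$, every $\nu_n\le 0$ since $\theta\ge 0$; because $c\le 0$ this yields $c\nu_n\ge 0$, so the zeroth-order coefficient $b+c\nu_n\ge b\ge 0$ in each linear solve and the maximum principle together with Lemma~\ref{lm:1} is available throughout.

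The core is a pair of inductions carried out by subtracting shifted equations. For the upper bound $\Psi_n\le\Psi$, I rewrite the exact equation for $\Psi$ in the shifted form $-\nabla\cdot a\nabla\Psi+(b+c\nu_{n-1})\Psi=-c(\Psi^{-\theta}-\nu_{n-1}\Psi)$ and subtract the $n$-th line of the algorithm; then $\Psi-\Psi_n$ solves a linear equation whose right-hand side is $-c(\Psi^{-\theta}-\Psi_{n-1}^{-\theta})+c\nu_{n-1}(\Psi-\Psi_{n-1})$. The mean value theorem applied to $z\mapsto z^{-\theta}$ recasts this as $c\,\theta\,(\Psi-\Psi_{n-1})\bigl(\eta^{-(1+\theta)}-\kappa_{n-1}^{-(1+\theta)}\bigr)$ for some $\eta$ between $\Psi_{n-1}$ and $\Psi$; since $\eta\ge\Psi_{n-1}\ge\kappa_{n-1}$ and $-(1+\theta)<0$ the parenthesis is $\le 0$, so the whole expression is $\ge 0$ (using $c\le 0$ and the inductive hypothesis $\Psi\ge\Psi_{n-1}$), and the maximum principle gives $\Psi_n\le\Psi$. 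Monotonicity $\Psi_n\ge\Psi_{n-1}$ uses the same device: the base step is immediate because the relevant right-hand side equals $-c\,\Psi_0^{-\theta}\ge 0$, and the inductive step reduces, after the mean value theorem, to $c\,\theta\,(\Psi_{n-1}-\Psi_{n-2})\bigl(\xi^{-(1+\theta)}-\kappa_{n-2}^{-(1+\theta)}\bigr)\ge 0$.

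The main obstacle, and the way this differs from Theorem~\ref{thm:4}, is that the shift $\nu$ is updated at every iteration, so consecutive iterates solve linear problems with different operators; expressing the $(n-1)$-th equation with the shift $\nu_{n-1}$ produces an a priori indefinite extra term $c(\nu_{n-1}-\nu_{n-2})\Psi_{n-1}$. What rescues the sign is precisely the prescription $\nu_{n-1}=-\theta\kappa_{n-1}^{-(1+\theta)}$ with $\kappa_{n-1}=\min_\Omega\Psi_{n-1}$: it forces every mean-value point $\eta,\xi$, which lies above the minimum of the relevant iterate, to satisfy $\eta^{-(1+\theta)}\le\kappa^{-(1+\theta)}$, exactly the inequality that makes the right-hand sides above nonnegative. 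This bookkeeping is automatic for $n\ge 2$ because then $\kappa_{n-1}$ genuinely equals $\min_\Omega\Psi_{n-1}$; the only delicate point is the seed, where one needs $\kappa_0\le\min_\Omega\Psi_0$. I would secure this by simply taking $\kappa_0=\min_\Omega\Psi_0$, which only makes $\nu_0$ more negative and $b+c\nu_0$ larger, hence preserves the maximum principle.

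Finally, being increasing and bounded above by $\Psi$, the sequence converges a.e. and in $L^2(\Omega)$ to some $\tilde\Psi$ with $0<\Psi_0\le\tilde\Psi\le\Psi$. The iterates are continuous and the convergence is monotone, so Dini's theorem yields uniform convergence; hence $\kappa_n\to\min_\Omega\tilde\Psi$ and $\nu_n\to\nu^\ast:=-\theta(\min_\Omega\tilde\Psi)^{-(1+\theta)}$. Because $z\mapsto z^{-\theta}$ is Lipschitz on the range $[\min_\Omega\tilde\Psi,\ \sup_\Omega\Psi]$, which is bounded away from $0$, the right-hand sides in Algorithm~\ref{alg:iter3} converge in $L^2(\Omega)$, so the elliptic $H^1$ estimate upgrades the convergence to $H^1(\Omega)$ and permits passing to the limit in the $n$-th equation. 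The limit equation collapses to~\eqref{eq:key}, so $\tilde\Psi$ is a positive weak solution, and uniqueness from Proposition~\ref{pr:3} identifies $\tilde\Psi=\Psi$.
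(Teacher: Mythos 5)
Your proposal follows the same monotone-iteration strategy as the paper's proof: show that $\{\Psi_n\}$ is increasing and bounded above by $\Psi$, then pass to the limit and identify the limit with $\Psi$ via the uniqueness in Proposition~\ref{pr:3}. Your sign computations via the mean value theorem applied to $z\mapsto z^{-\theta}$ are exactly equivalent to the paper's observation that $f(z)=z^{-\theta}-\nu z$ is increasing for $z\ge\kappa$, and they are correct. In fact you are more scrupulous than the printed proof on two points. First, the paper writes a single fixed shift $\nu$ in the equations for consecutive iterates, whereas Algorithm~\ref{alg:iter3} updates $\kappa$ and $\nu$ after every solve; your explicit tracking of $\nu_{n-1}$ versus $\nu_{n-2}$, and your verification that the a priori indefinite term $c(\nu_{n-1}-\nu_{n-2})\Psi_{n-1}$ recombines into an expression of the right sign, closes a gap the paper leaves implicit. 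Second, the seed: the algorithm initializes $\kappa$ with $\min_{\Omega} g^{2/(1+\theta)}$, which need not bound $\Psi_0$ from below when $b>0$ (solutions of $-\nabla\cdot a\nabla\Psi_0+b\Psi_0=0$ can dip below their boundary data), and the paper's claim that $\kappa\le\Psi_0$ was ``shown in Proposition~\ref{pr:3}'' is not literally there; your replacement $\kappa_0=\min_\Omega\Psi_0$ is the right repair.

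The one step of yours that does not hold as written is the appeal to Dini's theorem: Dini requires the pointwise limit $\tilde\Psi$ to be continuous, and at that stage of the argument $\tilde\Psi$ is only an a.e.\ monotone limit of continuous functions, so uniform convergence is not justified. Fortunately what you need from it is available much more cheaply. Since $\Psi_n\ge\Psi_{n-1}$ a.e., the numbers $\kappa_n=\min_\Omega\Psi_n$ form a nondecreasing sequence bounded above by $\sup_\Omega\Psi<\infty$, hence $\kappa_n$, and therefore $\nu_n=-\theta\kappa_n^{-(1+\theta)}$, converge. Alternatively, boundedness of $\nu_n$ alone suffices: rewriting the $n$-th solve as
\begin{equation*}
-\nabla\cdot a\nabla\Psi_n+b\Psi_n=-c\,\Psi_{n-1}^{-\theta}+c\,\nu_{n-1}\left(\Psi_{n-1}-\Psi_n\right),
\end{equation*}
the last term tends to $0$ in $L^2(\Omega)$ because $|\nu_{n-1}|\le\theta\,(\min_\Omega\Psi_0)^{-(1+\theta)}$ and $\Psi_{n-1}-\Psi_n\to0$ in $L^2(\Omega)$, so the limit equation is \eqref{eq:key} with no reference to a limiting shift. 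With either substitution your limit argument goes through. Note that the paper's own convergence step (``converges uniformly to a limit \dots a.e.'', inherited from Theorem~\ref{thm:3}) is loose in the same way, so this is a defect your write-up shares with the paper rather than a flaw in your strategy.
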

\begin{proof}
	 Recall that we showed $0 < \kappa \leq \Psi_0(\bx) \leq \Psi$ in Proposition~\ref{pr:3}, where $\Psi_0$ was named $\varphi$ at that time. We prove inductively that the sequence $\{\Psi_n\}_{n\ge 0}$ is again increasing and bounded from above by $\Psi$. The convergence then follows the same way as in the proof of Theorem \ref{thm:3}.

For the boundedness, the base step has been verified. To establish the inductive step, suppose $\kappa \leq \Psi_{n-1} \leq \Psi$, then
	\begin{equation}
	-\nabla\cdot a\nabla(\Psi - \Psi_m) + (b+c\nu)(\Psi - \Psi_m) = -c\left((\Psi^{-\theta} - \nu\Psi) - ( \Psi_{n-1}^{-\theta}-\nu\Psi_{n-1} )\right)\ge 0,
	\end{equation}
since the function $f(z) = z^{-\theta} - \nu z $ is increasing for $z \geq \kappa$. Hence $\kappa \leq \Psi_{n} \leq \Psi$ by the maximum principle.

For the monotonicity, the base step $\Psi_1 \geq \Psi_0$ follows from 
	\begin{equation}
	-\nabla\cdot a\nabla (\Psi_1 - \Psi_0) + (b + c\nu)(\Psi_1 - \Psi_0) = -c\Psi_0^{-\theta} \ge 0
	\end{equation}
and the maximum principle. The inductive step follows from the assumption $\Psi_{n-1} \geq \Psi_{n-2}$ and
\small
	\begin{equation}
	-\nabla \cdot a\nabla (\Psi_n - \Psi_{n-1}) + (b + c\nu)(\Psi_n - \Psi_{n-1}) = -c\left(( \Psi_{n-1}^{-\theta}-\nu \Psi_{n-1}) - ( \Psi_{n-2}^{-\theta}-\nu\Psi_{n-2})\right)\ge 0.
	\end{equation}
\normalsize
together with the maximum principle.
%
\end{proof}

\section{Reconstruction of $\eta$}\label{SEC:REC2}

We turn to the reconstruction of the quantum efficiency coefficient $\eta$ from the second internal data $S$, assuming that $\sigma_{x,f}$ has been successfully recovered based on the discussion in the previous sections. We will closely follow the treatment in~\cite{bal2014ultrasound}.

Recall that $w_0$ solves \eqref{eq:fl2w} with $\epsilon = 0$; $v$ and $p$ are the auxiliary functions obeying~\eqref{eq:v} and \eqref{eq:p}. Introduce the operators $\mathcal{T}_0, \mathcal{T}_1$ by
\begin{equation}\nonumber
\begin{aligned}
\mathcal{T}_0 \eta &:= w_0 = (-\nabla \cdot D_m\nabla + \sigma_{m,a})^{-1} (\eta \sigma_{x,f} u_0),\\
\mathcal{T}_1 \eta &:= p = (-\nabla\cdot D_x\nabla + (\sigma_{x,a} + \sigma_{x,f}))^{-1}(\eta \sigma_{x,f} v)
\end{aligned}
\end{equation}
subject to zero Dirichlet boundary conditions. Since the boundary $\partial\Omega$ is $C^2$, the operators $\mathcal{T}_0, \mathcal{T}_1:L^2(\Omega)\to H^2(\Omega)$ are bounded~\cite[Theorem 8.12]{gilbarg2015elliptic}. Then the second internal data~\eqref{eq:S} can be written as
\begin{align*}
S & = \gamma_m D_m  \nabla v \cdot \nabla (\mathcal{T}_0 \eta) + \beta_m \sigma_{m,a} v (\mathcal{T}_0 \eta) + \gamma_x D_x \nabla u_0 \cdot \nabla (\mathcal{T}_1 \eta) \\
 & \quad\quad + (\beta_x \sigma_{x,a} + \beta_f \sigma_{x,f}) u_0 (\mathcal{T}_1 \eta) - \eta \beta_f \sigma_{x,f} u_0 v \\
& := \mathcal{A}_1 + \mathcal{A}_2 + \mathcal{A}_0 
\end{align*}
where the linear operators $\mathcal{A}_0, \mathcal{A}_1,\mathcal{A}_2$ are defined by
\begin{equation}\nonumber
\begin{aligned}
\mathcal{A}_1 \eta &:= (\gamma_m D_m  \nabla v\cdot \nabla + \beta_m \sigma_{m,a} v)\mathcal{T}_0 \eta,\\
\mathcal{A}_2 \eta &:= \left(\gamma_x D_x \nabla u_0\cdot \nabla + (\beta_x\sigma_{x,a} + \beta_f\sigma_{x,f})u_0\right) \mathcal{T}_1\eta \\
\mathcal{A}_0 \eta &:= -\eta \beta_f \sigma_{x,f}u_0 v.
\end{aligned}
\end{equation}
We therefore have the following identity
\begin{equation}\label{eq:eta}
(\mathcal{A}_0 +\mathcal{A}_1 +\mathcal{A}_2)\eta = S.
\end{equation}
Uniqueness and stability follows readily from here, and $\eta$ can be explicitly reconstructed simply by solving this linear equation.

\begin{theorem}
Suppose $0$ is not an eigenvalue of $(\mathcal{A}_0 + \mathcal{A}_1 + \mathcal{A}_2)$, then $\eta$ is uniquely determined by the internal data $S$. Moreover, if $S_1$ and $S_2$ are two sets of internal data with quantum efficiency coefficients $\eta_1$ and $\eta_2$ respectively, then there exists a constant $K_{15} > 0$ such that
	\begin{equation}
	\|\eta_1 - \eta_2\|_{L^2(\Omega)}\le K_{15}\|S_1 - S_2\|_{L^2(\Omega)} .
	\end{equation}
\end{theorem}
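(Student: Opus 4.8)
The plan is to read the operator identity \eqref{eq:eta} as a single linear equation $T\eta = S$ on $L^2(\Omega)$, with $T := \mathcal{A}_0 + \mathcal{A}_1 + \mathcal{A}_2$, and to show that the hypothesis that $0$ is not an eigenvalue of $T$ forces $T$ to be boundedly invertible on $L^2(\Omega)$; both conclusions follow at once from this. The first step is to treat the algebraically trivial piece $\mathcal{A}_0\eta = -\eta\,\beta_f\sigma_{x,f}u_0 v$, which is simply multiplication by $m := -\beta_f\sigma_{x,f}u_0 v$. Under A-2 we have $\sigma_{x,f}\ge K_0>0$, and $u_0,v>0$ are bounded below by positive constants (for $u_0$ this is the positivity of $\Psi$, for $v$ it is the maximum principle with $h>0$); combined with $\beta_f\neq 0$ this shows $|m|$ is bounded away from $0$ and $\infty$. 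Hence $\mathcal{A}_0$ is an isomorphism of $L^2(\Omega)$, with $\mathcal{A}_0^{-1}$ the multiplication by $1/m$ and $\|\mathcal{A}_0^{-1}\|$ controlled by $K_0,K_1,\beta_f$.

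The second and main step is to show that $\mathcal{A}_1$ and $\mathcal{A}_2$ are compact on $L^2(\Omega)$. The mechanism is the two-derivative smoothing $\mathcal{T}_0,\mathcal{T}_1:L^2(\Omega)\to H^2(\Omega)$ recorded above, against which $\mathcal{A}_1,\mathcal{A}_2$ apply only first-order operators, yielding a net gain of one derivative and compactness via Rellich--Kondrachov. For the zeroth-order terms, such as $\beta_m\sigma_{m,a}v\,\mathcal{T}_0\eta$, I factor the map as $\mathcal{T}_0$ (bounded $L^2\to H^2$) followed by the compact embedding $H^2\hookrightarrow L^2$ and then multiplication by the $L^\infty$ coefficient, so the composition is compact. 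The delicate terms are the gradient ones, e.g. $\gamma_m D_m\,\nabla v\cdot\nabla(\mathcal{T}_0\eta)$: the map $\eta\mapsto\nabla(\mathcal{T}_0\eta)$ is bounded $L^2\to H^1$ and hence compact into $L^p$ for every $p<2^{*}$, but the coefficient $\nabla v$ lies only in $H^1\hookrightarrow L^6$ (as $v$ is merely $H^2$ in dimension $d\le 3$, not $C^1$) and the absorption coefficients are only $L^\infty$. I would therefore not aim to land in $H^1$; instead I pick, say, $p=3$, so that $\eta\mapsto\nabla(\mathcal{T}_0\eta)$ is compact $L^2\to L^{3}$ while multiplication by $\nabla v\in L^6$ is bounded $L^{3}\to L^2$ by H\"older, making the product compact $L^2\to L^2$. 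The same H\"older--Sobolev bookkeeping, using the elliptic regularity $\nabla u_0\in L^6$, handles the $\nabla u_0$ term in $\mathcal{A}_2$. This low-regularity accounting for the first-order terms is the part that requires genuine care.

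With compactness established, the conclusion is a standard Fredholm argument. I factor $T=\mathcal{A}_0\bigl(I+\mathcal{A}_0^{-1}(\mathcal{A}_1+\mathcal{A}_2)\bigr)$; since $\mathcal{A}_0^{-1}$ is bounded and $\mathcal{A}_1+\mathcal{A}_2$ is compact, $K:=\mathcal{A}_0^{-1}(\mathcal{A}_1+\mathcal{A}_2)$ is compact, so $I+K$ is Fredholm of index $0$. By the Fredholm alternative $I+K$ is boundedly invertible iff it is injective; because $\mathcal{A}_0$ is injective, $\ker(I+K)=\{0\}$ is equivalent to $\ker T=\{0\}$, i.e. to $0$ not being an eigenvalue of $T$, which is exactly the hypothesis. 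Hence $T$ is boundedly invertible and $\eta=T^{-1}S$ is uniquely determined by $S$. For stability, apply $T^{-1}$ to $S_1-S_2=T(\eta_1-\eta_2)$ to obtain $\|\eta_1-\eta_2\|_{L^2(\Omega)}\le\|T^{-1}\|\,\|S_1-S_2\|_{L^2(\Omega)}$, so the estimate holds with $K_{15}=\|T^{-1}\|$.
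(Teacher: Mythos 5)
Your proof is correct, and at the top level it follows the same strategy as the paper: read \eqref{eq:eta} as $T\eta=S$, show $\mathcal{A}_1+\mathcal{A}_2$ is compact on $L^2(\Omega)$ so that $T$ is Fredholm, and conclude bounded invertibility (hence uniqueness and the stability estimate with $K_{15}=\|T^{-1}\|$) from the hypothesis that $0$ is not an eigenvalue. The differences are in the two steps where the paper is terse, and both favor your version. First, the paper obtains compactness by asserting that $\mathcal{A}_1,\mathcal{A}_2:L^2(\Omega)\cap L^\infty(\Omega)\to H^1(\Omega)$ are bounded and then applying Rellich; but under assumption A-2 the coefficients $\nabla v$ and $\nabla u_0$ lie only in $H^1\hookrightarrow L^6$ (the functions $v,u_0,\mathcal{T}_0\eta,\mathcal{T}_1\eta$ are $H^2$, not $C^1$), so a term like $D_m\nabla v\cdot\nabla(\mathcal{T}_0\eta)$ lies in $L^3$ but need not lie in $H^1$: its derivative contains products of type $L^2\cdot L^6\subset L^{3/2}$. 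Your H\"older--Sobolev bookkeeping --- $\eta\mapsto\nabla(\mathcal{T}_0\eta)$ compact from $L^2$ into $L^3$ by Rellich, followed by multiplication by the $L^6$ coefficient, bounded from $L^3$ into $L^2$ --- lands in $L^2$ directly and is the correct repair of this step. Second, you make explicit what the paper's phrase ``which implies the sum is a Fredholm operator'' silently uses: that $\mathcal{A}_0$, multiplication by $-\beta_f\sigma_{x,f}u_0v$ with modulus bounded away from $0$ and $\infty$, is an isomorphism of $L^2(\Omega)$, so that $T=\mathcal{A}_0\bigl(I+\mathcal{A}_0^{-1}(\mathcal{A}_1+\mathcal{A}_2)\bigr)$ is a compact perturbation of an invertible operator and hence Fredholm of index zero; without index zero, injectivity would not upgrade to bounded invertibility. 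The one caveat worth recording is that the positive lower bounds on $u_0$ and $v$ you invoke for the isomorphism of $\mathcal{A}_0$ rest on the positivity results for $\Psi$ proved earlier (in Case (1.1) this requires the boundary condition $g>g_0$), a dependence the paper's own proof shares.
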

\begin{proof}
As $\mathcal{A}_1,\mathcal{A}_2: L^2(\Omega) \cap L^{\infty}(\Omega)\to H^1(\Omega)$ are bounded and the inclusion $H^1(\Omega)\subset L^2(\Omega)$ is compact by the Sobolev embedding theorem, $\mathcal{A}_1, \mathcal{A}_2: L^2(\Omega) \cap L^{\infty}(\Omega)\to L^2(\Omega)$ are compact operators, which implies the sum $\mathcal{A}_0 + \mathcal{A}_1 + \mathcal{A}_2:L^2(\Omega)\to L^2(\Omega)$ is a Fredholm operator. If $0$ is not an eigenvalue, then $(\mathcal{A}_0 + \mathcal{A}_1 + \mathcal{A}_2)$ has a bounded inverse, hence the solution $\eta$ is unique. The constant $K_{15}$ in the stability estimate can be taken as the operator norm of the inverse, which is a bounded linear operator on $L^2(\Omega)$.
\end{proof}

\section{Numerical experiments}\label{SEC:EXP}

In this section, we numerically implement the proposed reconstruction procedures for $\sigma_{x,f}$ in Section \ref{SEC:NUM} and for $\eta$ in Section \ref{SEC:REC2}.
We perform the reconstructions in 2D domain $[-0.5, 0.5]^2$ which is triangulated into $37008$ triangles. The $4$-th order Lagrange finite element method is employed to solve the equations. Some of background coefficients that we assume to be known are set as follows
\begin{equation}\nonumber
\begin{aligned}
&D_x \equiv 0.1,\quad &D_m =0.1 + 0.02\cos(2x)\cos(2y), \\
& \sigma_{x,a} \equiv 0.1,\quad &\sigma_{m,a} = 0.1 + 0.02\cos(4x^2 + 4y^2).
\end{aligned}
\end{equation}
The absorption coefficient $\sigma_{x,f}$ and quantum efficiency $\eta$ are shown in Figure~\ref{fig:1}.
\begin{figure}[!htb]
	\centering
	\includegraphics[scale=0.35]{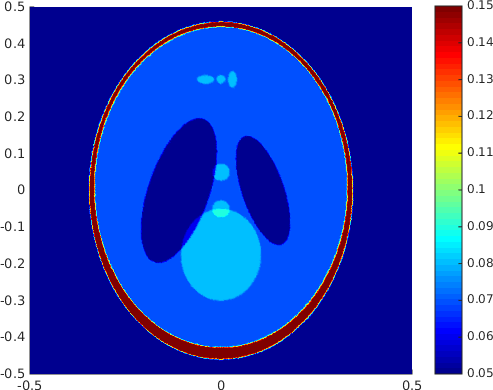}\quad
	\includegraphics[scale=0.35]{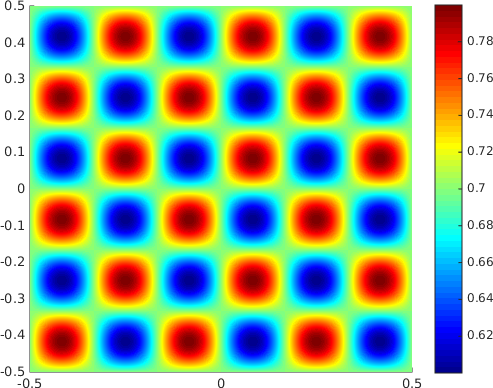}
	\caption{Left: The absorption coefficient $\sigma_{x,f}$ of fluorophores. Right: The quantum efficiency coefficient $\eta$.}
	\label{fig:1}
\end{figure}
The excitation source on the boundary is the restriction of the $C^2$ function $$g(x,y) = e^{2x} + e^{-2y}.$$

We demonstrate three examples based on the signs of $\theta$ ,$b$ and $c$ as discussed in Section \ref{SEC:NUM}.  To reconstruct $\sigma_{x,f}$, we implement the corresponding iterative algorithms.
To reconstruct $\eta$, we discretize $\mathcal{A}_0, \mathcal{A}_1,\mathcal{A}_2$ in \eqref{eq:eta} using Lagrangian finite element method of order $k\ge 3$, and then solve the resulting linear system with the Krylov subspace method (restarted GMRES). It should be noted that such finite element solution only belongs to $C^{0,\alpha}(\Omega)$, thus the gradients that appear in $\mathcal{A}_1, \mathcal{A}_2$ may not agree on the interface of adjacent elements. Such difference however could be negligible when the mesh is sufficiently fine and the exact solution is regular enough according to the $L^{\infty}$ estimate in~\cite{scott1976optimal}. It indicates that the error of gradients on each element is bounded by
\begin{equation}
\max_{T_l\in T} \|\nabla u - \nabla u^{\ast}\|_{\infty} \le O( h^{k-1} |u|_{W_{\infty}^k(\Omega)}),
\end{equation} 
where $u^{\ast}$ is the finite element solution and $u$ is the exact weak solution on mesh $T=\{T_l\}_{l=1}^N$.

%
%
%

\medskip
\textbf{Case (1.1): $-1 < \theta < 0$, $b \geq 0$ and $c \ge 0$.} In this example, the intrinsic elasto-optical parameters are $n_x = -0.8$, $n_m = -0.7$, $n_f = -0.9$, then
\begin{equation}\nonumber
\begin{aligned}
&\gamma_x = -2.6,\quad \gamma_m = -2.4, \\
&\beta_x = -0.6,\quad \beta_m = -0.4,\quad \beta_f = -0.8,
\end{aligned}
\end{equation}
and $\tau = 3.25$ and $\mu = -0.25$, $\theta = -\frac{9}{17}$.  The coefficients in~\eqref{eq:key} satisfy $b \geq 0$ and $c \ge 0$, hence 
we can apply Algorithm~\ref{alg:iter1} to solve the nonlinear equation. The reconstructed images of $\sigma_{x,f}$ and $\eta$, with and without noises, are illustrated in Figure~\ref{fig:1a}. 
\begin{figure}[!htb]
	\centering
	\includegraphics[width=0.3\textwidth,height=0.16\textheight]{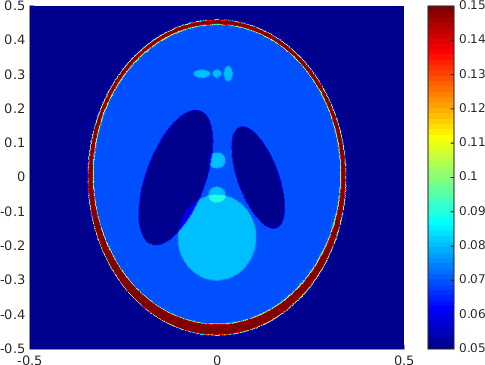}
	\includegraphics[width=0.3\textwidth,height=0.16\textheight]{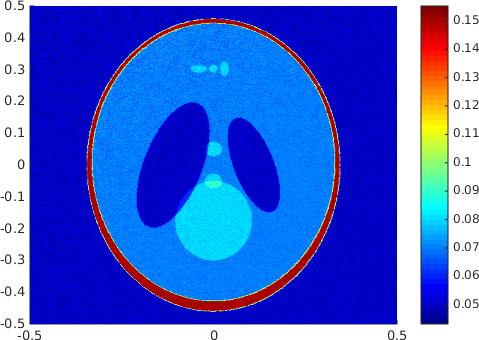}
	\includegraphics[width=0.3\textwidth,height=0.16\textheight]{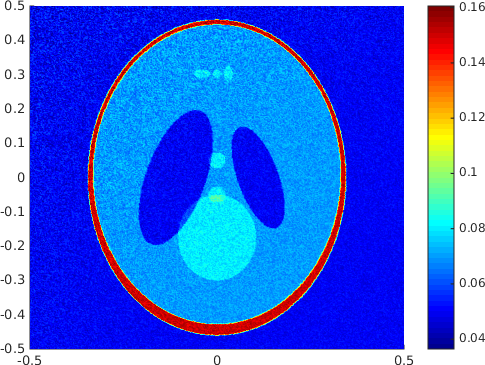}\\
	\includegraphics[width=0.3\textwidth,height=0.16\textheight]{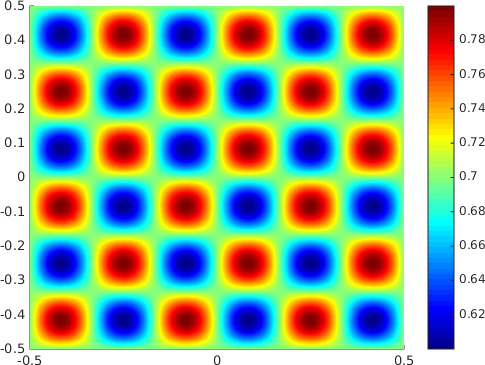}
	\includegraphics[width=0.3\textwidth,height=0.16\textheight]{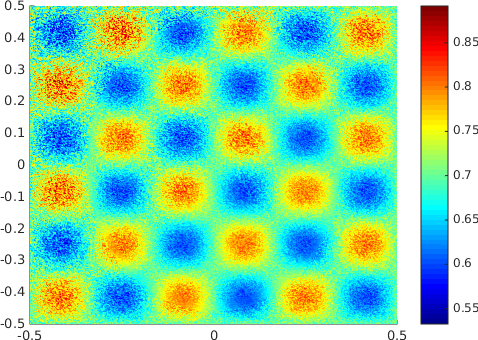}
	\includegraphics[width=0.3\textwidth,height=0.16\textheight]{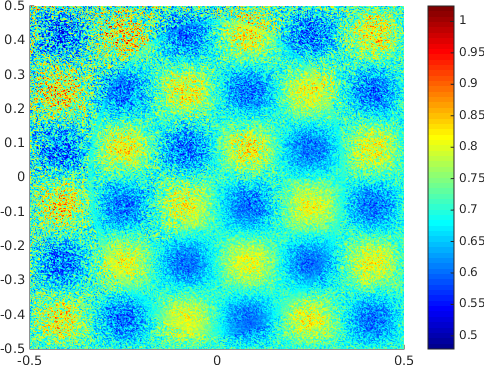}
	\caption{The reconstruction of $\sigma_{x,f}$ and $\eta$ in Case (1.1). First row, from left to right, $0\%$, $1\%$, $2\%$ random noises are added to the internal data $Q$ and the relative $L^1$ errors of the reconstructed $\sigma_{x,f}$ are $0.000132\%$, $3.88\%$, $7.76\%$ respectively. Second row, from left to right, assuming the knowledge of $\sigma_{x,f}$ from the first row, $0\%$, $1\%$, $2\%$ random noises are added to the internal data $S$.  The relative $L^2$ errors of reconstructed $\eta$ are $0.00313\%$, $5.60\%$, $11.7\%$ respectively.}
	\label{fig:1a}
\end{figure}

\textbf{Case (1.2): $\theta < -1$, $b \geq 0$ and $c \ge 0$.}
In this example, the intrinsic elasto-optical parameters are chosen as $n_x = -0.2$, $n_m = 0.5$, $n_f = -0.3$, then
\begin{equation}\nonumber
\begin{aligned}
&\gamma_x = -1.4,\quad \gamma_m = 0.0, \\
&\beta_x = 0.6,\quad \beta_m = 2.0,\quad \beta_f = 0.4,
\end{aligned}
\end{equation}
and $\tau = -3.5$, $\mu = 0.5$, $\theta = -\frac{9}{5}$.  The coefficients in~\eqref{eq:key} satisfy $b \geq 0$ and $c \ge 0$, 
hence we can apply Algorithm~\ref{alg:iter2} to solve the nonlinear equation.  The reconstructed images of $\sigma_{x,f}$  and $\eta$ are illustrated in Figure~\ref{fig:2a}.
\begin{figure}[!htb]
	\centering
	\includegraphics[width=0.3\textwidth,height=0.16\textheight]{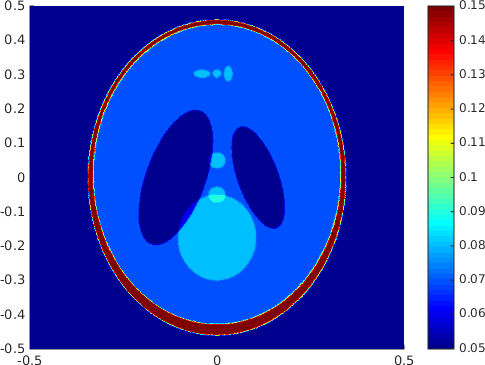}
	\includegraphics[width=0.3\textwidth,height=0.16\textheight]{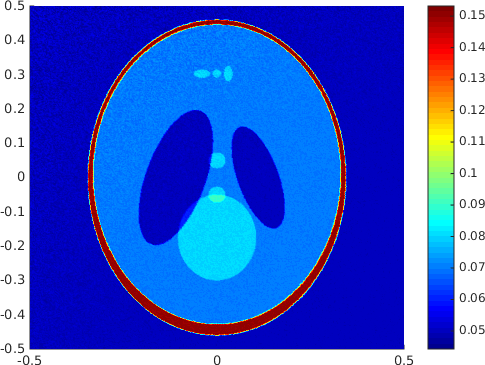}
	\includegraphics[width=0.3\textwidth,height=0.16\textheight]{d2e1fc}\\
	\includegraphics[width=0.3\textwidth,height=0.16\textheight]{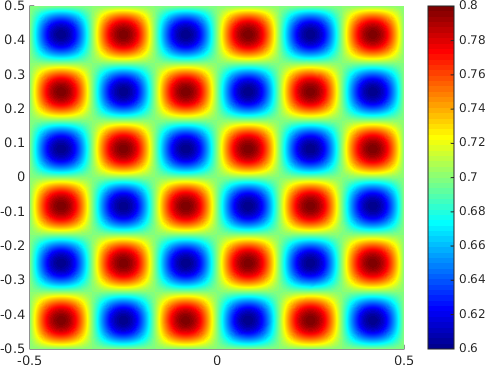}
	\includegraphics[width=0.3\textwidth,height=0.16\textheight]{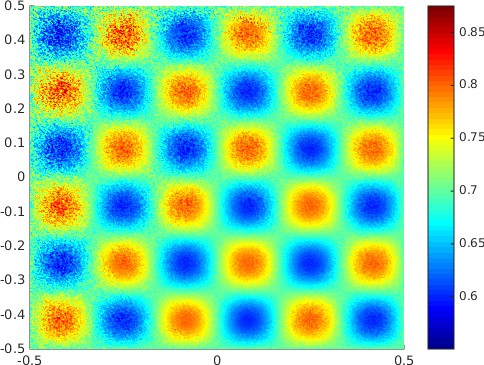}
	\includegraphics[width=0.3\textwidth,height=0.16\textheight]{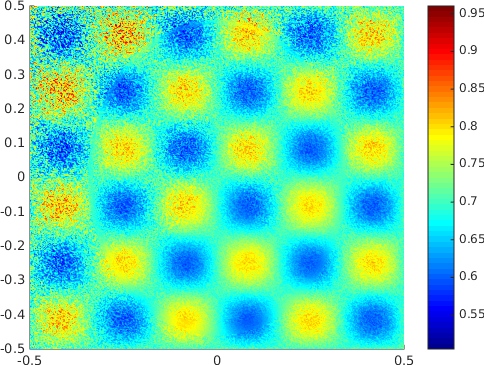}
	\caption{The reconstruction of $\sigma_{x,f}$ and $\eta$ in Case (1.2). First row, from left to right, $0\%$, $1\%$, $2\%$ random noises are added to the internal data $Q$ and the relative $L^1$ errors of reconstructed $\sigma_{x,f}$ are $0.0086\%$, $2.62\%$, $5.27\%$ respectively. Second row, from left to right, assuming the knowledge of $\sigma_{x,f}$ from the first row, $0\%$, $1\%$, $2\%$ random noises are added to the internal data $S$.  The relative $L^2$ errors of reconstructed $\eta$ are $0.0150\%$, $4.23\%$, $8.89\%$ respectively.}
	\label{fig:2a}
\end{figure}

\medskip
\textbf{Case (2): $\theta \geq 0$, $b \geq 0$ and $c \leq 0$.}
In this example, we choose the intrinsic elasto-optical parameters as $n_x = 0.6$, $n_m = 0.8$, $n_f = -0.65$, then
\begin{equation}\nonumber
\begin{aligned}
&\gamma_x = 0.2,\quad \gamma_m = 0.6, \\
&\beta_x = 2.2,\quad \beta_m = 2.6,\quad \beta_f = -0.3,
\end{aligned}
\end{equation}
and $\tau = -\frac{2}{3}$, $\mu = -\frac{25}{8}$, $\theta = 5$.  The coefficients in~\eqref{eq:key} satisfy $b \geq 0$ and $c \leq 0$. 
We apply Algorithm~\ref{alg:iter3} to solve the nonlinear equation.
The reconstructed images of $\sigma_{x,f}$  and $\eta$ are in Figure~\ref{fig:3a}.
\begin{figure}[!htb]
	\centering
	\includegraphics[width=0.3\textwidth,height=0.16\textheight]{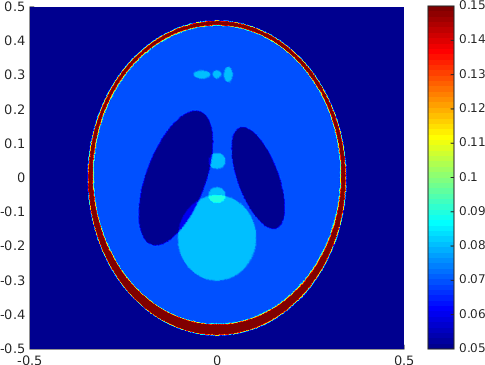}
	\includegraphics[width=0.3\textwidth,height=0.16\textheight]{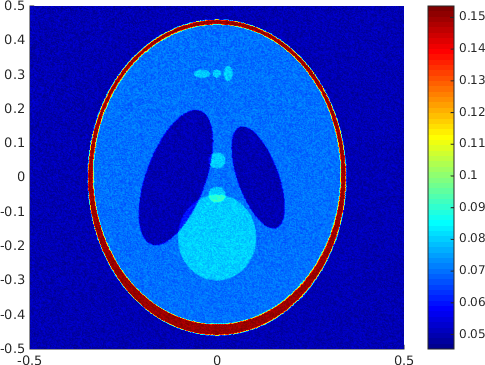}
	\includegraphics[width=0.3\textwidth,height=0.16\textheight]{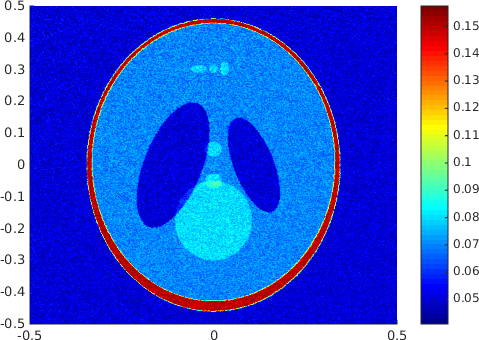}\\
	\includegraphics[width=0.3\textwidth,height=0.16\textheight]{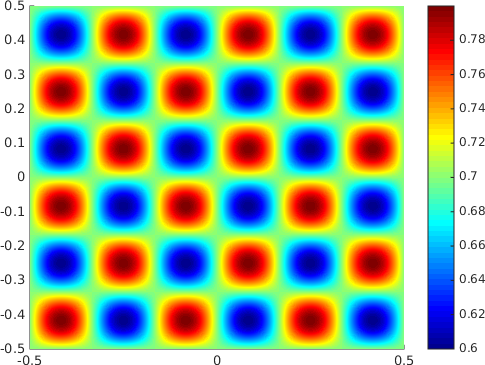}
	\includegraphics[width=0.3\textwidth,height=0.16\textheight]{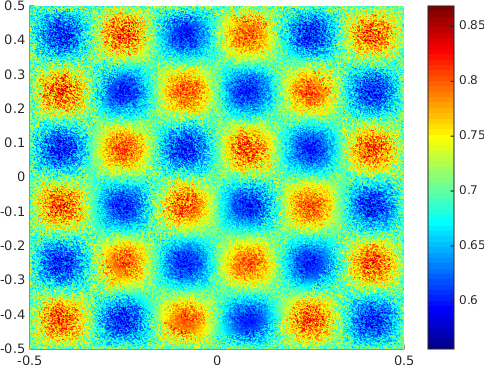}
	\includegraphics[width=0.3\textwidth,height=0.16\textheight]{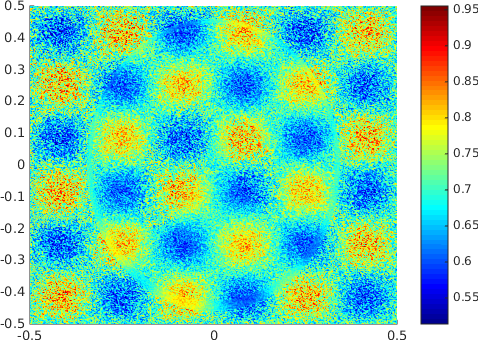}
	\caption{The reconstruction of $\sigma_{x,f}$ and $\eta$ in Example III. First row, from left to right, $0\%$, $1\%$, $2\%$ random noises are added to the internal data $Q$ and the relative $L^1$ errors of reconstructed $\sigma_{x,f}$ are $0.00147\%$, $3.68\%$, $7.38\%$ respectively. Second row, from left to right, assuming the knowledge of $\sigma_{x,f}$ from the first row, $0\%$, $1\%$, $2\%$ random noises are added to the internal data $S$.  The relative $L^2$ errors of reconstructed $\eta$ are $0.00392\%$, $4.65\%$, $9.48\%$ respectively.}
	\label{fig:3a}
\end{figure}
\section{Conclusion}\label{SEC:CONCLUSION}
In this paper, we studied the fluorescence ultrasound modulated optical tomography in the diffusive regime. Assuming knowledge of the elasto-optical coefficients and the background parameters $D_x, D_m, \sigma_{x,a}, \sigma_{m,a}$, we proved that the absorption coefficient $\sigma_{x,f}$ and the quantum efficiency coefficient $\eta$ can be uniquely and stably reconstructed from the internal data $Q$ in~\eqref{eq:hamilton1} and $S$ in~\eqref{eq:S} extracted from the boundary measurement~\eqref{eq:meas}. A key step is the analysis of an elliptic semi-linear equation in~\eqref{eq:key}. We proposed three iterative algorithms to solve this equation based on the signs of the coefficients. These algorithms are shown to generate sequences of functions which converge to the unique weak solution. Finally, reconstructive procedures for $\sigma_{s,f}$ and $\eta$ are provided and numerically implemented in several experiments, in the presence or absence of noise, to demonstrate the efficiency of the reconstruction.

\section*{Acknowledgement}


This material is based upon work supported by the National Science Foundation under Grant No. DMS-1439786 while the authors were in residence at the Institute of Computational and Experimental Research in Mathematics (ICERM) in Providence, RI, during the Fall 2017 semester program ``Mathematical Challenges in Radar and Seismic Imaging". The authors would like to thank ICERM for the hospitality and for creating a collaborative atmosphere. The authors are also grateful to Prof. Kui Ren from University of Texas Austin for bringing our attention to fUMOT.

\bibliographystyle{siam}
\bibliography{../bib/main}
\end{document}